\newtheorem{theorem}{Theorem}
\newtheorem{definition}{Definition}
\def\A{\mathcal{A}}
\def\prob{\mathbb P} 
\def\N{\mathcal N}
\def\R{\mathcal R}
\def\S{\mathcal S}
\def\I{\mathcal I}
\def\X{\mathcal X}
\def\U{\mathcal U}
\def\V{\mathcal V}
\def\E{\mathbb E}
\def\x{\mathbf x}
\def\y{\mathbf y}
\def\tB{\tilde B}
\newcommand*{\email}[1]{\texttt{#1}}
\title{\vspace{-1in}Global Density Analysis for an Off-Lattice Agent-Based Model
}
\date{\today}
\author{Michael A. Yereniuk \thanks{Department of Mathematical Sciences, Worcester Polytechnic Institute, Worcester, MA 
  (\email{mayereniuk@wpi.edu})}
\and Sarah D. Olson \thanks{Department of Mathematical Sciences, Worcester Polytechnic Institute, Worcester, MA 
  (\email{sdolson@wpi.edu})}}
\begin{document}
\maketitle
\begin{abstract}
Agent-Based (AB) models are rule based and are a relatively simple discrete method that can be used to simulate complex interactions of many agents. The relative ease of implementing the computational model is often counter-balanced by the difficulty of performing rigorous analysis to determine emergent behaviors. In addition, without precise definitions of agent interactions, calculating existence of fixed points and their stability is not tractable from an analytical perspective and can become computationally expensive, involving potentially thousands of simulations. Through developing a precise definition of an off-lattice AB model with a specified interaction neighborhood, we develop a general method to determine a Global Recurrence Rule (GRR). This allows estimates of the state densities in time, which can be easily calculated for a range of parameters in the model. The utility of this framework is tested on an Epidemiological Agent-Based, E-AB, model where agents correspond to people that are in the susceptible, infected, or recovered states. The interaction neighborhoods of agents are determined in a mathematical formulation that allows the GRR to accurately predict the long term behavior and steady states. The modeling framework outlined will be generally applicable to many areas and can be easily extended.
\end{abstract}

\begin{center}
\textbf{KEYWORDS}: Agent-Based Model, Interaction Neighborhood, Global Recurrence Rule, Stability Analysis
\end{center}

\section{Introduction}
When the goal is to understand a complex system of interacting agents that are decision makers, there are several different modeling frameworks from which to choose. Often, if the focus is on understanding and capturing each of the interactions and movement, an individual-based approach such as an Agent-Based (AB) model is used \cite{Hinkelmann2011,Holcombe,Laubenbacher2012, Packard1985}. When we are interested in global dynamics, or are more interested in how the field is evolving, density-based approaches are utilized, e.g. difference equations or systems of differential equations. For each of these frameworks, there are different pros and cons with respect to the ability to formalize and analyze a model, as well as the ease with which one can simulate the model \cite{North2014}. There are many challenges, which can arise due to noise, nonlinearities, and other spatial or temporal variations in the system \cite{Roberts2015}.

In AB models, the agents are each individually assessing their surrounding environment, potentially moving or changing state at each time increment based on a given set of rules \cite{Bonabeau7280}. The state-dependent rules could be deterministic or stochastic, and are quite often a nonlinear function based on information (e.g. other agents, states, or other environmental factors) in a locally defined interaction neighborhood \cite{Deutsch}. The movement can be on-lattice with discretely defined locations that are assigned a given probability. For example, if a lattice is a regular, two-dimensional grid, the on-lattice movement of  a given agent could be either horizontal or vertical movement to an adjacent lattice node at each time increment \cite{Goldsztein, Stevens}. 
Movement can also be off-lattice or continuous in space, where new locations can be determined {via specified rules or determined by solving systems of differential equations.}
Often, questions of interest concern the emergent behavior of a large number of interacting agents, which can be hard to capture at the continuous scale \cite{Holcombe}. We note that since this modeling framework is quite general, the agent could represent any feature of interest in a given system \cite{Pogson2006}, which is why these types of models are frequently used for social, biological, financial, and military applications \cite{An2017,Bonabeau7280,Chaturapruek,Cosgrove,Devitt-Lee,Goldsztein, Interian, Othmer,Stevens}. In terms of biological applications at the cellular level, AB models have been used to investigate  tumor growth where the agents are the individual cells that make up the tumor \cite{Interian}, sperm cell motility where the sperm are the individual agents \cite{Burkitt,Burkitt12}, and signaling pathways within and on the membrane of cells where agents are molecules and receptors \cite{Bonabeau7280}.

It is well known that as the number of agents in a system increases, this may cause simulations investigating long term dynamics to become intractable \cite{Holcombe}. Additionally, there is generally a desire to understand how model outcomes change with respect to varying parameter values \cite{Chen}, which again would necessitate possibly thousands of simulations. 
As described previously, there is not a universal or agreed upon standard to specify these models and, in many cases, the mathematical description of the rules is also not specified \cite{Hinkelmann2011}. 

The focus of this current work is an introduction of a theoretical formalism and subsequent analysis of an off-lattice AB model where agents exhibit stochastic behavior when moving and changing states.  {Although we focus on the case of off-lattice AB models, we note that this modeling framework has several similarities to cellular automata (CA) \cite{Deutsch, Hinkelmann2011} and dynamic network models \cite{Starnini12,Gross09}.  CA models also have agents but they are generally fixed in space and state changes of an agent are generally determined based on the state of the same neighboring agents. This is in contrast to our model where agents will move and will have proximity to different agents in time, having different interaction neighborhoods. In the case of dynamic network models, agents are generally at particular nodes and interact with other agents at their node or potentially other nodes that are connected with an edge. The dynamic portion could correspond to the dynamic movement of the agents on the network or it could correspond to the creation or deletion of certain edges or nodes in time. Since the AB models we consider have different agents interacting in time, there are similarities. However, in our case, movement and interaction neighborhoods are based on the same specified spatial scale whereas interactions in a dynamic network might occur over a range of spatial scales corresponding to the range of edge lengths. }

The analysis of the off-lattice AB model will rely heavily on a precise definition of the interaction neighborhoods of agents.  In contrast to other, more traditional approaches \cite{Deutsch}, we will view the interaction neighborhood as a region where an agent potentially exerts state changes to other agents. Specifically, the necessary notation for the AB model is outlined in Section \ref{notation}, which is similar to previous work on CA models \cite{Deutsch,Hinkelmann2011}. 
In Section \ref{grr-basic}, we detail how to derive a Global Recurrence Rule (GRR) to determine the expected value for the number of agents in each state when assuming that an agent's state and movement are solely determined by the agent's current status. To show the applicability of this formalism, in Section \ref{eca-results}, we illustrate how a GRR can be derived for an epidemiological-AB (E-AB) model that captures the spread of an infection such as influenza. The long term behavior and steady state solutions obtained for the infected, recovered, and susceptible states using our GRR have good agreement with simulations and we are able to prove stability of fixed points. In addition, we illustrate with the E-AB how to use additional information about the dynamics to develop a more refined local approximation of the neighborhoods, with reduced error. In Section \ref{eca}, we compare the different models and emphasize which assumptions need to be satisfied in order for the GRR to be a valid approximation for the E-AB model. 

\section{Initial definitions}\label{notationsec}
\subsection{AB notation}\label{notation}
We first need to create a precise definition of the properties of the agents and their interactions in order to determine the correct governing equations and hence be able to mathematically analyze the model.
We define a bounded region of interest $\Omega$ in which we track the agents.   
We suppose that we have a finite collection, $\X$, of $N$ agents indexed by $1,...,k-1,k,k+1,...,N$.
At every time $t\geq 0$, each agent $k$ is assigned one and only one state $s_k^t$ and location $\x_k^t$. 
We define the set containing all possible agent states as the state space $\Sigma = \{\U_1, \U_2, ... \}$. That is, for each agent $k$ and time $t$, we necessitate that $s_t^k = \U_i$ for exactly one state $\U_i \in \Sigma$.

The domain, $\Omega$, may be either a graph or network consisting of a discrete set of nodes connected by edges or it may be a continuous, bounded subset of $\mathbb{R}^M$ for some $M\in \mathbb{N}$.
If $\Omega$ is discrete, we say these agents exist on-lattice{, where they occur on nodes and travel along the graph edges}.  
Otherwise, if $\Omega$ is continuous, we say these agents exist off-lattice.  In either case, the agents can either be stationary or move.  
{The model keeps track of individual agents performing a deterministic or random walk, governed by a model specific probability distribution over a bounded domain} \cite{Chaturapruek,Davis1990,Othmer}. 
It is important to note that the scalings and distributions may be spatially, temporally, or state dependent. 

An individual-based model is defined by the pair $\A = (f,\N)$, where we denote the collection of neighborhoods for each agent as 
$\N=\left\{\N^1,\N^2,...,\N^N\right\}$ 
and $f$ is a local transition rule, which defines how agents change states \cite{Deutsch}.  One can visualize the agents and neighborhoods in the insert of Figure \ref{fig: insert}.
If the model specifies that each neighborhood is temporally or state dependent, we define $\N_t^j$ as the neighborhood of agent $j$ in state $s_t^j$ at iteration $t$. 
Since the agents move in time, these neighborhoods are time dependent.
We emphasize that the neighborhood $\N_t^j$ is the region of influence in which agent $j$ may exert state changes to other agents.
Traditional CA definitions define the neighborhood $\N_t^j$ as the region in which other agents exert state changes to agent $j$, where density analysis has been studied previously \cite{Deutsch}.  However, since we want to focus on locations of moving individuals that can influence state changes to others in a region, we assert the opposite---$\N_t^j$ is the region in which agent $j$ exerts state changes to other agents.  This perspective allows us greater freedom to model more realistic state and property-dependent neighborhoods.
The local transition rule is a function $f:\X \to \Sigma$.  Since each agent belongs to one and only one state, the local transition rule $f$ is well-defined.  The function assignment $s_{t+1}^k = f(s_t^k)$ depends conditionally on the neighborhoods in which $\x_t^k$ is contained. 

Define $B_t^{\V,\U}$ as the $\V\to \U$ transition region
at time $t$.  That is, $B_t^{\V,\U}$ is the region such that if, at time $t$, there is some agent $k$ such that $s_t^k = \V$ and $\x_t^k \in B_t^{\V,\U}$, then agent $k$ can transition to state $\U$ at time $t+1$.  In terms of our neighborhoods of influence, we can formally define the transition region as follows. 
\begin{definition}
The \underline{$\V\to \U$ transition region} at time $t$ is 
$B_t^{\V,\U} = \bigcup\limits_{j\in \mathcal{C}} \N^j$,
with indexing set 
$\mathcal{C} = \left\{
j \, |\, \exists k \text{ such that } s_t^k = \V \text{ and } \x_t^k \in \N_t^j \Rightarrow \prob\left(f(s_t^k)=\U\right)>0
\right\}.
$
\end{definition}

We define the transition region in this way, since, in general, agents in states other than $\U$ can cause agents to transition to state $\U$.
Our explicit definition of the transition region $B_t^{\V,\U}$ for each $\V, \U\in \Sigma$ will allow us to clearly define $f(s_t^k)$.  

We are interested in finding a GRR, which calculates the expected density of agents in a state at each iteration throughout $\Omega$.  
Let $\U \in \Sigma$, with $U_t$ denoting the number of agents in state $\U$ at iteration $t$ (that is, $U_t=|\{k: s_t^k=\U\}|$).    We denote $W(\V \to \U)$ to be the transition probability that an agent in state $\V$ at iteration $t$ transitions to state $\U$ at time $t+1$.  
We summarize our AB notation in Glossary \ref{glossary}.
\begin{glossary}[H]
\textit{
\begin{itemize}
\item $\X$: the collection of agents (indexed as $1,...,k-1,k,k+1,...,N$)
\item $\Sigma$: state space
\item $s_t^k$: the state of agent $k$ at time $t$
\item $\Omega$: the bounded region of interest
\item $\x_t^k$: the location of agent $k$ at time $t$
\item $\N_t^k$: neighborhood of agent $k$ at time $t$
\item $f:\X \to \Sigma$: the transition rule that assigns each agent at time $t$ to a\\ unique state at time $t+1$
\item $U_t$: the number of agents in state $\U$ at time $t$
\item $B_t^{\V,\U}$: the $\V \to \U$ transition region
\item $W(\V\to \U)$: the probability an agent in state $\V$ transitions to state $\U$
\end{itemize}}
\caption{Agent-Based Model Terms}
\label{glossary}
\end{glossary}

\subsection{Global recurrence rule}\label{grr-basic}
We now have the necessary notation to derive the expected density of 
{agents in}
a state at any given time.
The state of an agent $k$ at $t+1$ only depends on its state ($s_t^k$) and location ($\x_t^k$) at time $t$, making this a Markovian process \cite{Markov}.  Hence, the probability that agent $k$ is in state $\U$ at time $t+1$ given that the agent was in state $\V$ at time $t$ reduces to
\begin{equation}
\prob\left(s_{t+1}^{k} = \U \Big| s_t^k = \V\right) = \prob\left(\x_t^k \in B_t^{\V,\U}\right) W(\V \to \U),
\end{equation}
the product of the probability that agent $k$ is located in a $\V \to \U$ transition region with the probability that agent $k$ transitions from state $\V$ to state $\U$.
We can then find $\E(U_{t+1})$, the expected number of agents in state $\U$ at iteration $t+1$, by
\begin{align}
\E(U_{t+1})&=\E\left(\big|\{k:s_{t+1}^{k}= \U\} \big|\right)\\
&= \sum_{k=1}^N \prob\left(s_{t+1}^k = \U\right) 
\label{eq:sum_cells}
\\
&= \sum_{\V \in \Sigma} \sum_{\{k:s_t^{k}= \V\}} \prob\left(s_{t+1}^k= \U \Big|s_t^k = \V\right) 
\label{eq:sum_states}
\\
&= \sum_{\V \in \Sigma} \sum_{\{k:s_t^{k}= \V\}} \prob\left(\x_t^k \in B_t^{\V,\U}\right)W(\V \to \U) .
\label{GeneralGRR}
\end{align}
Note that equality between \eqref{eq:sum_cells} and \eqref{eq:sum_states} is due to the fact that we can partition the collection of agents $\X$ by the distinct states in $\Sigma$. 
This leads us to the definition of the GRR.
\begin{definition}
Let $\U,\V \in \Sigma, U_t = |\{k:s_t^k=\U\}|$, and $B_t^{\V,\U}$ be the $\V\to\U$ transition region at time $t$.  We define the \underline{GRR} as
\[
\E(U_{t+1}) = \sum_{\V \in \Sigma} \sum_{\{k:s_t^{k}= \V\}} \prob\left(\x_t^k \in B_t^{\V,\U}\right)W(\V \to \U). \\
\]
\label{defGRR}
\end{definition}

Thus, to find expected values of 
{the number of agents in each state} 
analytically, one just needs a framework to calculate both the probability of being in a transition region 
{as well as } 
the probability that an agent in the transition neighborhood will transition to a particular state.

\section{Application to disease dynamics}\label{eca-results}
\subsection{Phenomenological perspective}
Disease dynamics provides an interesting case study to determine the validity of the GRR.  Assume there are infected individuals in a population.  For simplicity, we can divide the remaining population into two classes, those who are susceptible to infection and those who were infected but cannot currently infect other individuals.  We denote these classes ``susceptible'' and ``recovered,'' respectively.   Further, suppose that after a finite time the recovered can become susceptible to infection again.  That is, an individual in the recovered state is temporarily conferred immunity before returning to the susceptible state. This is often referred to as a Susceptible-Infected-Recovered (SIR) Epidemiological model, where simulations and analysis have been an active research area for many years \cite{Holko16,Prieto2016,Volz09}, especially with respect to endemic equilibrium sizes \cite{Ma, Miller,Volz07} and infectivity wave speed \cite{Fuentes99,Marziano}. 
The modeling framework for SIR Epidemiological studies has been based on differential equations, networks, and AB models; each approach has provided some successes. CA models (on-lattice) with no movement have been studied and compared to differential equation models, extending our understanding of disease spread \cite{Fuentes99}. Extensions of CA models to real-world data in a geographic region with movement of people \cite{Holko16} gives more realistic disease spread where intervention strategies can be tested, but analysis of the model is often intractable. Challenges still exist to capture the relevant dynamics and to make time sensitive and accurate predictions with regards to disease outbreaks \cite{BALL201563,Lloyd,Ma,Marziano,Miller,Prieto2016,Roberts2015}.

In terms of the AB framework presented in Section \ref{notationsec}, it is relatively straightforward to implement an E-AB model.  There are only three states: $\S$ (susceptible), $\I$ (infected), and $\R$ (recovered).  In addition, the only neighborhoods of interest are those belonging to the infected agents since they will influence the state change of susceptible agents in their region of influence. 

\subsection{E-AB model}
To simplify, we let the continuous domain, 
$\Omega$, 
of the E-AB be the 
unit square.
The agents remain in the infected and recovered states for $T_i$ and $T_R$ iterations, respectively.
Thus, our state space for the E-AB is $\Sigma = \{\S,\I_1,\I_2,...,\I_{T_I},\R_1,R_2,...,R_{T_R}\}$.  This dynamic is also referred to as an $SI^{T_I}R^{T_R}$ model \cite{Ma}.

We initialize $N$ agents in $\Omega$ such that $N-1$ agents are in state $\S$ ($S_0 = N-1$) and one agent is in state $\I_1$ ($I_0=1$), 
where $S_t=|\{k:s_t^k=\S\}|$ and 
$I_t = \cup_{j=1}^{T_I} |\{k:s_t^k= \I_j\}|$ for each time $t$.
We index the initially infected agent as $k=1$ and initialize its location in the center of the region $\Omega$. 
The susceptible agents are randomly initialized following a uniform random distribution (i.e. $\x_0^k \sim Uniform(\Omega)$ for $k=2,3,\dots,N$). 

Each agent\footnote{
For simplicity, every agent in this model moves according to the same rules.  However, one could produce a model where each state moves differently.  For example, the infected agents could move at a much smaller spatial step than the susceptible or recovered agents.}
moves by a uniform random walk inside $\Omega$.  
If $\x_t^k = (x,y)$, then $\x_{t+1}^k = (x+\Delta r \cos\theta, y+\Delta r \sin\theta)$,
where $\theta \sim Uniform[0,2\pi)$ and $\Delta r \ll 1$ is the constant radial spatial step.  
Reflective boundary conditions are enforced along $\partial \Omega$.  
That is, if an agent hits the boundary (or is about to move outside of $\Omega$), it is shifted a distance $\Delta r$ into $\Omega$ along the direction normal to the boundary. 

For our E-AB, we assume that the infectivity neighborhood of any infected agent $k$ is radially symmetric with radius $\rho_0$.  That is, 
\\
$\N_t^k = \left\{\y \in \Omega: ||\y-\x_t^k||_2\leq \rho_0\right\}$,
the collection of all points of a distance less than $\rho_0$ away from agent $k$, is
the area in which susceptible agents can become infected by agent $k$.  

Now consider any agent $k$ such that $s_t^k = \S$.  In order for agent $k$ to become infected, we require $\x_t^k$ to be in an infected neighborhood, regardless of the iteration of infectivity.  We define the $\S \to \I_1$ transition region as 
$B_t^{\S,\I_1} = \bigcup_{\{k: s_t^k = \I_j, \exists j\}}\N_t^k$.  
Recall that the $\S$ to $\I_1$ transition region is the region in which an agent in state $\S$ can transition to state $\I_1$. The susceptible agent has the potential to become infected when in at least one neighborhood of an infected agent at any state of the infection (for $j=1,\ldots,T_I$).
In this simple E-AB model, the number of infectivity neighborhoods in which agent $k$ is located does not affect the probability of agent $k$ being infected.  
The susceptible agents located inside $B_t^{\S,\I_1}$ become infected with probability $1-\kappa$, where $\kappa\in [0,1]$ is the contact tolerance.  
For simplification, we will assume that $\rho_0$ and $\kappa$ are scalar constants\footnote{
Our E-AB is a toy example to demonstrate the efficacy of the GRR.  For simplification, $\rho_0$ and $\kappa$ are constants.  In practice, $\rho_0$ and $\kappa$ should be random variables drawn from specific probability distributions, such as the models found in \cite{Lloyd}.} and the transition rules between states are given below.

\begin{definition}The \underline{local transition rule $f:\X\to \Sigma$}, such that $s_{t+1}^k = f(s_t^k)$ are given as follows:
{\small
\begin{align}
&\text{If } s_t^k = \S: 
\quad
f(s_t^k) = \begin{cases}
\I_1&: \text{ if }\x_t^k \in B_t^{\S,\I_1}\text{ and }\kappa>X,\text{ where }X\sim Uniform[0,1],\\
\S&: \text{otherwise,}
\end{cases}
\label{S_TransitionRule}
\\
&\text{If }s_t^k = \I_j, \, \text{{for some} } j=1,2,...,T_I:
\quad 
f(s_k^t) = 
\begin{cases}
\I_{j+1}&: \text{ if }1\leq j < T_I ,\\
\R&: \text{ if }j=T_I,
\end{cases}
\label{I_TransitionRule}
\\
&\text{If } s_t^k = \R_m, \, \text{{for some} } m=1,2,...,T_R:
\quad
f(s_t^k) = 
\begin{cases}
\R_{m+1}&: \text{ if }1\leq m < T_R, \\
\S&: \text{ if }m=T_R.
\end{cases}.
\label{R_TransitionRule}
\end{align}
}
\end{definition}

Figure \ref{fig: Simulation} illustrates the off-lattice E-AB simulation as outlined above using $N=10000$ agents, where the susceptible, infected, and recovered agents are colored as black, red, and {blue}, respectively.  We implemented each iteration by first determining the region of infectivity from a constant infectivity radius of $\rho_0=0.04$.  We then updated the agent states according to the above transition rules \eqref{S_TransitionRule}--\eqref{R_TransitionRule} with contact tolerance $\kappa=0.95$.  This ``high'' contact tolerance relates to a ``low'' probability of a susceptible agent becoming infected.  Moreover, the time spent in the infective state and the time spent in the recovered state are $T_I = T_R = 30$.
Finally, we update the agent location by performing unbiased random walks with $\Delta r = 0.001$.  
\begin{figure}
   \centering
   \subfloat[$t=10$]{ \includegraphics[width=0.38\textwidth, height=0.3\textwidth]{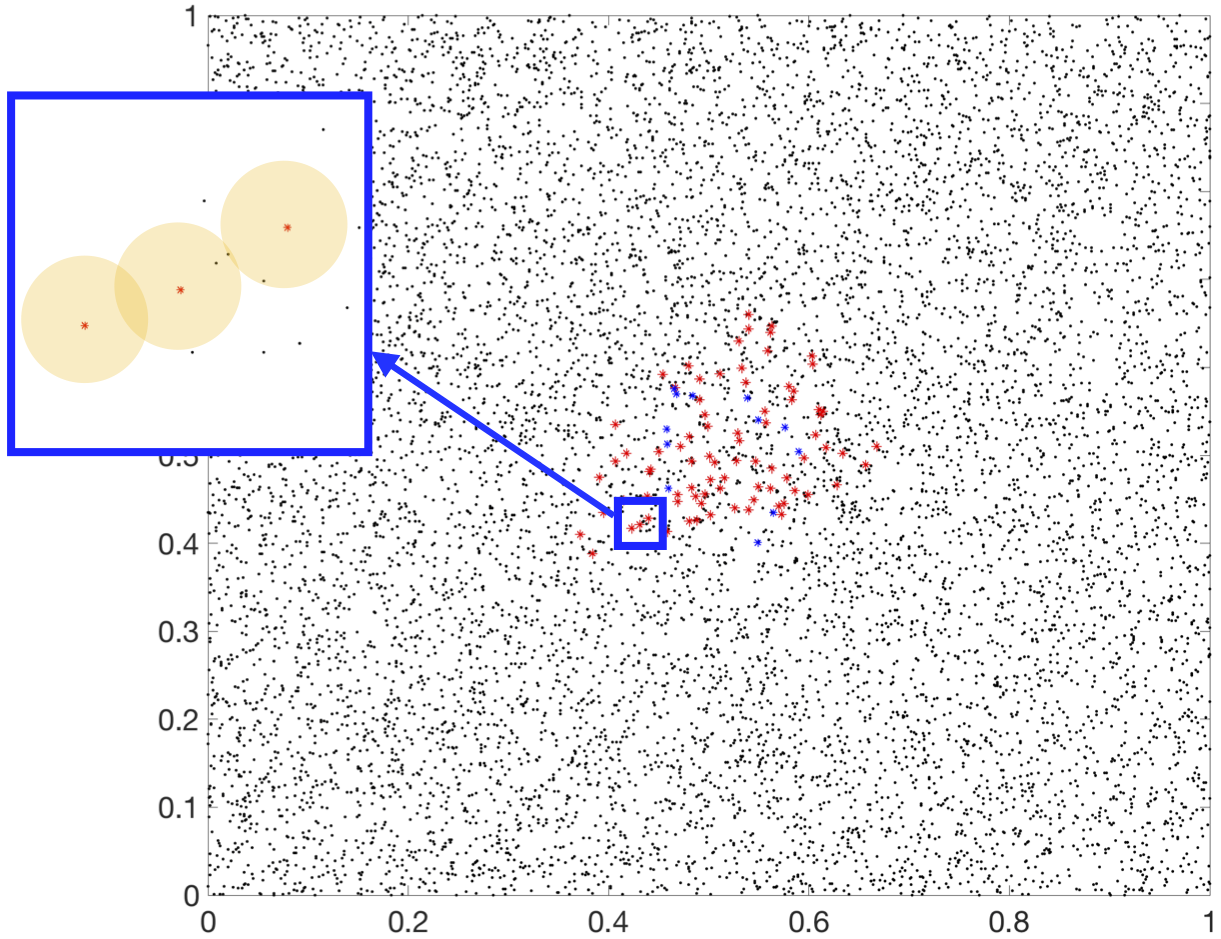} \label{fig: insert}}
   \qquad
   \subfloat[$t=20$]{ \includegraphics[width=0.35\textwidth, height=0.3\textwidth]{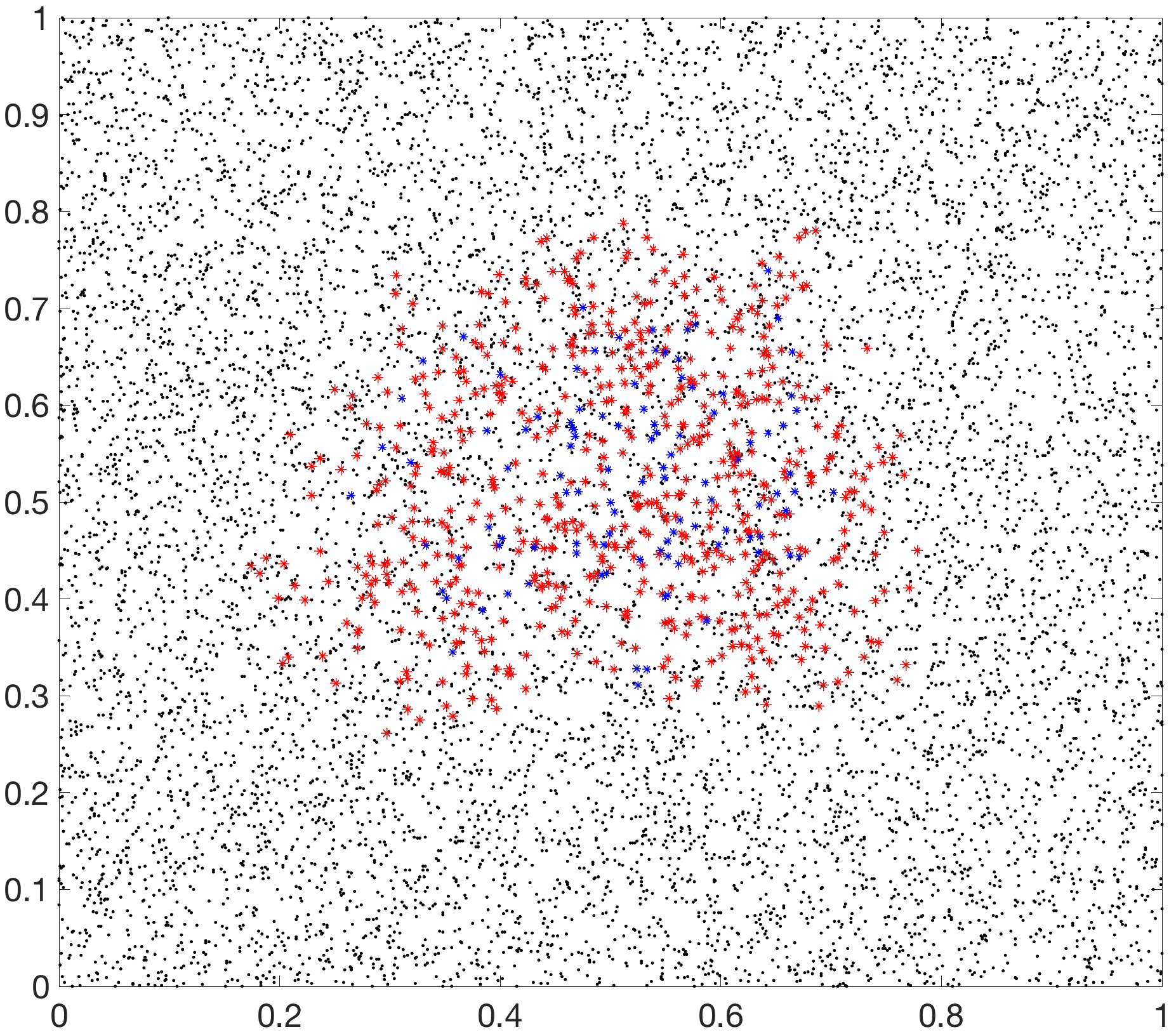} }\\
   \subfloat[$t=30$]{ \includegraphics[width=0.35\textwidth, height=0.3\textwidth]{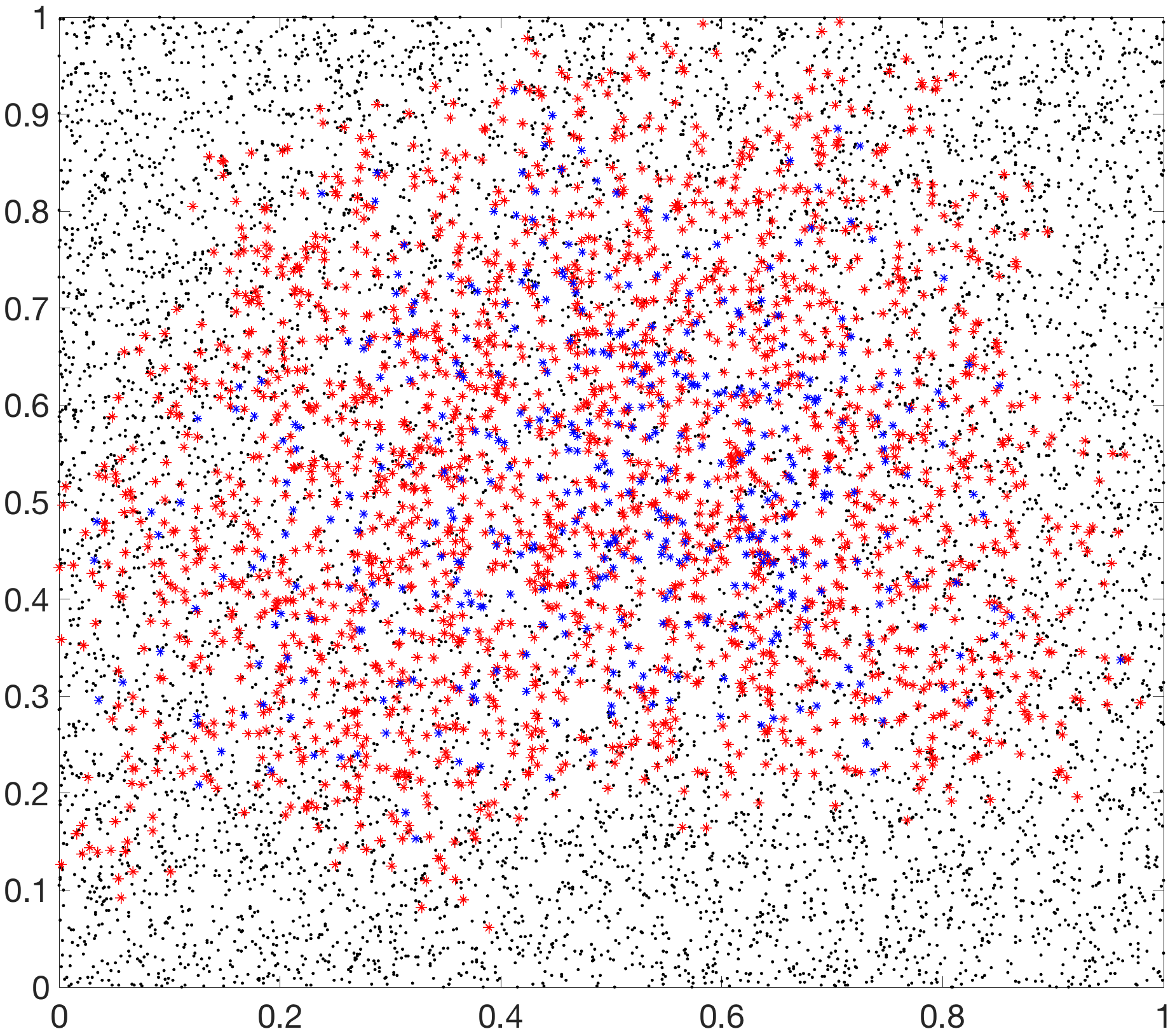}}
   \qquad
   \subfloat[$t=40$]{ \includegraphics[width=0.35\textwidth, height=0.3\textwidth]{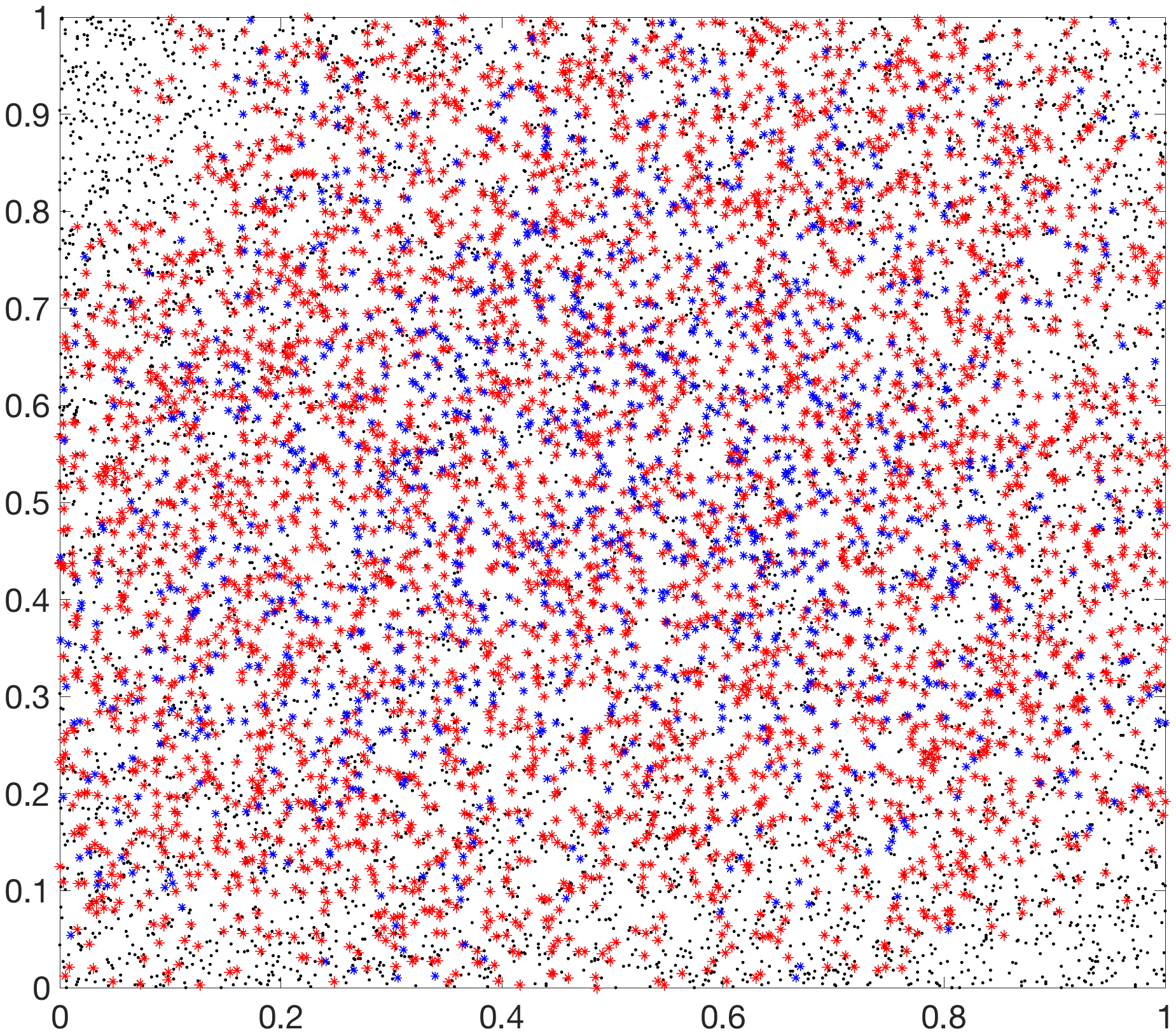}}
   \caption{Simulation of 10000 agents at various time steps with a single agent infected initially, which is located at (0.5,0.5).  
   The states are denoted as \textcolor{black}{$\blacksquare$} Susceptible, \textcolor{red}{$\blacksquare$} Infected, \textcolor{blue}{$\blacksquare$} Recovered.  
   The simulation parameters are defined as: contact tolerance = $\kappa=0.95$, infectivity radius = $\rho_0=0.04$, infection time = $T_I = 30$, recovered time = $T_R = 30$, and spatial step = $\Delta r = 0.001$.
   As time increases, the epidemic spreads as a wave throughout the domain. The inset on (a) shows the radially symmetric neighborhoods of a few infected agents.}
   \label{fig: Simulation}
\end{figure}

\subsection{Globally homogeneous GRR}\label{GlobalSection}
To reduce the number of equations, we can assume a Markovian (time-independent) process.  We further simplify the number of states in our analysis by defining $\I = \bigcup_{i=1}^{T_I} \I_i$ as the infected state, which is  independent of the amount of time spent in the infected state.  
Similarly, we define $\R=\bigcup_{i=1}^{T_R} \R_i$, the total number of recovered agents, regardless of the amount of time spent in the recovered state.
We 
{reduce the number of states in this way when calculating the GRR because we }
are primarily interested in calculating the expected total number of infected and recovered agents at each particular iteration $t$, not the particular stage of the infection or recovery.

Adapting equation (\ref{GeneralGRR}) to our E-AB model, we have the system
\begin{align}
S_{t+1} &= \sum_{\{k:s_t^k = \S\}} W(\S \to \S) + \sum_{\{k:s_t^k = \R\}} W(\R \to \S), \\
I_{t+1} &= \sum_{\{k:s_t^k = \S\}} \prob\left(\x_t^k \in B_t^{\S,\I}\right)W(\S \to \I)
+ \sum_{\{k:s_t^k = \I\}}W(\I \to \I), \label{IGRR}\\
R_{t+1} &= \sum_{\{k:s_t^k = \I\}}W(\I \to \R) + \sum_{\{k:s_t^k  = \R\}} W(\R \to \R).
\label{RGRR}
\end{align}
{Recall that $W(\V\to\U)$ denotes the probability an agent in state $\V$ transitions to state $\U$.}
The total number of agents is constant, so $S_t = N - \left(I_t + R_t\right)$.  This allows the reduction of the above system to just two equations, namely, \eqref{IGRR} and \eqref{RGRR}.  We will now determine the expressions for the probabilities.

We further simplify the derivation by ignoring the effect of the boundary on the infectivity neighborhood, which allows the assumption that the area of the region $\N_t^k$ is independent of $k$ and $t$.  Let $\mu(\N)$ denote the area of any neighborhood $\N_t^k$.  Since our simulation is two-dimensional, we then make the approximation\footnote{
Agent neighborhoods near the boundary will have smaller area, since, by definition, the neighborhood is contained in $\Omega$.  However, we assume that since the initially infected agent is located in the center of the region, there are sufficiently many infected agents away from the boundary to make this simplification reasonable.} 
that $\mu(\N):=\mu(\N_t^k) = \pi \rho_0^2$ for all $k$ and $t$.  It follows that the probability that the $k$th agent is located in the neighborhood of the $j$th agent is
\begin{equation}
\prob(\x_t^k \in \N_t^j) = \frac{\mu(\N)}{\mu(\Omega)}, \quad \forall \j,
\end{equation}
the ratio of the area of the infectivity neighborhood and the area of the region.

For any susceptible agent $k$ to transition to state $\I$, it is sufficient that $\x_t^k \in B_t^{\S,\I}$.  
If we assume that the transition probability $W(\S \to \I)$ does not depend on the number of infectivity neighborhoods and that the infectivity neighborhoods are uniformly distributed within $\Omega$, then by the multiplication rule of independent events,
\begin{equation}
\prob(\x_t^k \notin B_t^{\S,\I}) = \left( 1 - \frac{\mu(\N)}{\mu(\Omega)}\right)^{I_t}.
\end{equation}
It follows that the probability of $\x_t^k$ being located in an $\S \to \I$ transition neighborhood is then
\begin{equation}
\prob(\x_t^k \in B_t^{\S,\I}) = 1 - \left( 1 - \frac{\mu(\N)}{\mu(\Omega)}\right)^{I_t}.
\label{GlobalS}
\end{equation}
Since $W(\S \to \I)$ depends on agent $k$ being located in at least one infectivity neighborhood, and does not change if agent $k$ is located in more than one infectivity neighborhood, it follows that $W(\S \to \I) = 1-\kappa$, where $\kappa\in[0,1]$ is the contact rate.

Then, for any $k$ such that $s_t^k = \S$, by inserting \eqref{GlobalS} into \eqref{IGRR} we have:
\begin{equation}
\begin{split}
\prob(s_{t+1}^k = \I) &= \prob(\x_t^k \in B_t^{\S,\I})W(\S \to \I) \\
&= 
\underbrace{ \left\{ 1 - \left( 1 - \frac{\mu(\N)}{\mu(\Omega)}\right)^{I_t} \right\}}_{\substack{\text{probability in at least}\\ \text{one infectivity region}}}
\underbrace{ (1-\kappa)}_{\substack{\text{probability}\\ \text{becoming}\\ \text{infected}}}.
\label{GlobalSEq}
\end{split}
\end{equation}
Moreover, by assuming the cumulative time spent in infected states is uniformly distributed, we have for any agent $k$ such that $s_t^k = \I$,
\begin{align}
W(\I \to \R) &= 1/T_I, \label{GlobalR}\\
W(\I \to \I)&=1-1/T_I,
\label{GlobalI}
\end{align}
where $T_I$ is the time spent in the infective state.
This assumption is valid for a large number of agents and for a sufficiently large number of iterations.
Inserting equations (\ref{GlobalSEq}), (\ref{GlobalR}), and (\ref{GlobalI}) into (\ref{IGRR}), we have
\small{
\begin{equation}
\underbrace{I_{t+1}}_{{\substack{\text{total}\\ \text{infected}\\ \text{agents} \\ \text{at time }t+1}}} = 
\underbrace{(N-I_t-R_t)}_{\substack{\text{total}\\ \text{susceptible}\\ \text{agents} \\ \text{at time }t}}
\underbrace{\left\{ 1 - \left( 1 - \frac{\mu(\N)}{\mu(\Omega)}\right)^{I_t} \right\}}_{ \substack{\text{probability in at least}\\ \text{one infectivity region}}}
\underbrace{(1-\kappa)}_{\substack{\text{probability}\\ \text{becoming}\\ \text{infected}}}
+ 
\underbrace{\left\{1-\frac{1}{T_I}\right\}}_{\substack{
\text{probability remain} \\
\text{in infected state}}}
\underbrace{I_t}_{\substack{\text{total}\\ \text{infected}\\ \text{agents} \\ \text{at time }t}}. 
\end{equation}
}
That is, the expected number of infected agents at $t+1$ is the sum of two terms.  The first term is the product of the expected number of susceptible agents at time $t$ multiplied by the probability a susceptible agent transitions to state $\I$.  The second term is the expected number of infected agents at $t$ times the probability that an infected agent remains in state $\I$.
Similarly, by assuming the time in recovered states is uniformly distributed, we have for any agent $k$ such that $s_t^k = \R$, the probability of staying in state $\R$ is
\begin{equation}
W(\R \to \R) = 1 - 1/T_R.
\label{GlobalRR}
\end{equation}  
Then, inserting (\ref{GlobalR}) and (\ref{GlobalRR}) into (\ref{RGRR}) we have
\begin{equation}
R_{t+1} = \frac{1}{T_I}I_t + \left(1 - \frac{1}{T_R}\right) R_t,
\end{equation}
and the expected number of recovered agents at iteration $t+1$ is the sum of two terms.  The first term is the expected number of infected agents at time $t$ times the probability an infected agent transitions to state $\R$.  The second term is the expected number of recovered agents at $t$ times the probability a recovered agent remains in state $\R$.

Since $T_I$ and $T_R$ will be explicitly defined, we can easily find a $q\in \mathbb{R}$ such that $T_R=qT_I$.  We then have the following E-AB GRR:
\begin{align}
I_{t+1} &= (N-I_t-R_t)\left\{ 1 - \left( 1 - \frac{\mu(\N)}{\mu(\Omega)}\right)^{I_t} \right\}(1-\kappa)
+ \left(1-\frac{1}{T_I}\right)I_t :=H(I_t,R_t), 
\label{Global GRRI} \\
R_{t+1} &= \frac{1}{T_I}I_t + \left(1 - \frac{1}{qT_I}\right) R_t:=G(I_t,R_t).
\label{Global GRRR}
\end{align}
Since $S_t = N - \left(I_t + R_t\right)$, we have recurrence formulas for the expected agent densities in each state at each iteration.
With our GRR, we now have a general framework to further analyze the behavior of the system. Note that we identify \eqref{Global GRRI}-\eqref{Global GRRR} as globally homogeneous since we have assumed the infectivity neighborhoods are uniformly distributed in the domain with the same constant area.

\subsection{Fixed point analysis for globally homogeneous GRR}\label{FixedPointSIRSection}
We can now calculate the stability of the fixed points of the globally homogeneous GRR by finding all solutions to the system that simultaneously solve $I_{t+1}-I_t=0$ and $R_{t+1} - R_t=0$.
That is, we need to find all solutions to the system
\begin{equation}
\small
\begin{bmatrix}
(N-I_t-R_t)\left\{ 1-\left(1-\mu(\N)\right)^{I_t}\right\}(1-\kappa)-\frac{1}{T_I}I_t \\
\frac{1}{T_I}I_t - \frac{1}{qT_I}R_t
\end{bmatrix}
=
\begin{bmatrix} 0 \\ 0 \end{bmatrix}.
\label{SIRFixedSetup}
\end{equation}
We have two fixed points.  One is the trivial fixed point, $(I,R)=(0,0)$.  The other is the point along the line $R=qI$ that solves the fixed point problem
\begin{equation}
(N-(1+q)I)\left\{ 1-\left(1-\mu(\N)\right)^{I}\right\}(1-\kappa)-\frac{1}{T_I}I = I.
\end{equation}
This second fixed point has to be computed numerically.

We will analyze the fixed point $(0,0)$ using two-dimensional perturbation theory where details can be found in 
\cite{Deutsch,LinSegel}.
The Jacobian matrix of the E-AB GRR is
\begin{align*}
J= {\tiny \begin{bmatrix}
-(1-\kappa)\Big\{(N-I-R)\left( (1-\mu(\N))^I \ln(1-\mu(\N))\right) + (1-(1-\mu(\N))^I)\Big\}-\frac{1}{T_I} & -(1-\kappa) (1-(1-\mu(\N))^I) \\
\frac{1}{T_I} & 1-\frac{1}{qT_I}
\end{bmatrix} }.
\end{align*}
Evaluating at $(0,0)$ gives us 
\begin{equation}
J\Big|_{(I,R)=(0,0)} = \begin{bmatrix}
-N(1-\kappa)\ln\left(1-\mu(\N)\right)- \frac{1}{T_I} & 0 \\
\frac{1}{T_I} & 1-\frac{1}{qT_I}
\end{bmatrix}.
\end{equation}
The eigenvalues are $\lambda_1 = 1-\frac{1}{qT_I}$ and $\lambda_2=-N(1-\kappa)\ln(1-\mu(\N))-\frac{1}{T_I}$.  
Clearly, since $qT_I = T_R>0$ we know $|\lambda_1|<1$.  
Now, suppose $\lambda_2<1$.  It follows that $\alpha > 1 - \exp\left(\frac{1+1/T_I}{N(1-\kappa)}\right)$.  
That is, $\frac{\mu(\N)}{\mu(\Omega)} > 1 - \exp\left(\frac{1+1/T_I}{N(1-\kappa)}\right)$.  
We know $\mu(\N)\ll \mu(\Omega)$ and it is reasonable to assume that $N$ is sufficiently large such that $N(1-\kappa)>2$.  This contradicts the inequality. It must follow that $\lambda_2>1$.  Thus, we have that $(0,0)$ is a saddle point that is only stable along the nullcline $I=0$.

Since we do not have an explicit solution of the second fixed point, we cannot perform the same computation as we did for $(0,0)$.  However, we know that the following are bounded: $H$ and all derivatives of $H$, 
{the expected number of infected agents at the next time step from equation \eqref{Global GRRI}}, and the domain. Additionally, since $I$ is repelled by $(0,0)$, we can infer that the second fixed point is stable.

Similar to a differential equation SIR model, we are able to obtain fixed points. However, the fixed points are different and there are not direct comparisons since we assume a moving population with dynamic contacts that allow for infection whereas a differential equation assumes a well-mixed population.

\subsection{Locally homogeneous GRR}\label{LocalSection}
When deriving the globally homogeneous E-AB GRR, (\ref{Global GRRI}) and (\ref{Global GRRR}), we assumed that the infectivity neighborhoods were uniformly distributed throughout $\Omega$.  For this test case, we initialize one infected agent, $s_0^1 = \I$, such that it is initially located in the center of the region $\x_0^1 = (0.5,0.5)$.  However, from observation of simulations, such as in Figure \ref{fig: Simulation} (or intuition), we know there is a wave of infectivity propagating from this initial infected agent.  The susceptible agents that agent 1 infects must be located in its neighborhood $\N^{1}$.  Future infected agents will be located in those neighborhoods.  So rather than generalize a uniform distribution of infected agents, we should modify the GRR to account for the infection wave front.

We then need to create a sequence of regions 
$\left\{\tB_0^{\S,\I}, \tB_1^{\S,\I}, ...\right\}$
, where $\tB_0^{\S,\I} = \N^1$ and $\tB_t^{\S,\I}$ is the smallest connected region containing the infection front at time $t$.  
For notation, we will use tildes above variables to denote variables and functions specifically defined for the locally homogeneous case.
\begin{definition} 
$\displaystyle \tB_t^{\S,\I} = \inf_A \left\{A\subset \Omega: A \text{ is connected and } \cup_{\{k:s_t^k=\I\}} \N_t^k \subset A\right\}$.
\end{definition}

Suppose agent $k$ is such that $s_t^k = \S$ at iteration $t$.  We have the following conditional probability that an agent is located in the $\S\to \I$ transition neighborhood, $B_t^{\S,\I}$, given that the agent is within the infection front, $\tB_t^{\S,\I}$:
\begin{equation}
\prob\left(\x_t^k \in 
B_t^{\S,\I} \Big| \x_t^k \in \tB_t^{\S,\I}\right) 
= 1-\left(1 - \frac{\mu(\N)}{\mu\left(\tB_t^{\S,\I}\right)}\right)^{I_t}.
\label{modify1}
\end{equation}
In general, for regions $\N$ and $\tB_t^{\S,\I}$, the probability is given as
$\prob\left(\x_t^k \in \tB_t^{\S,\I}\right) = \frac{\mu\left(\tB_t^{\S,\I}\right)}{\mu(\Omega)}.$

Using Bayes' theorem \cite{Talbott}, we have that the locally homogeneous probability of an agent being in the $\S\to \I$ transition neighborhood is
\begin{equation}
\begin{split}
\prob\left(\x_t^k \in B_t^{\S,\I}\right)
&= \prob\left(\x_t^k \in B_t^{\S,\I} \Big| \x_t^k \in \tB_t^{\S,\I}\right) \prob\left(\x_t^k \in \tB_t^{\S,\I}\right), \\
&= \left\{1-\left(1 - \frac{\mu(\N)}{\mu\left(\tB_t^{\S,\I}\right)}\right)^{I_t}\right\}\frac{\mu\left(\tB_t^{\S,\I}\right)}{\mu(\Omega)}.
\end{split}
\label{modify3}
\end{equation}
Inserting \eqref{modify3} into \eqref{GlobalSEq}, our locally homogeneous E-AB GRR is
\begin{equation}
\begin{split}
\tilde I_{t+1} &= \left(N-\tilde I_t - \tilde R_t\right)
\left\{1-\left(1 - \frac{\mu(\N)}{\mu\left(\tB_t^{\S,\I}\right)}\right)^{I_t}\right\}\frac{\mu\left(\tB_t^{\S,\I}\right)}{\mu(\Omega)}(1-\kappa)  \\
&\qquad \qquad +\left(1-\frac{1}{T_I}\right)\tilde I_t =:\tilde H(\tilde I_t, \tilde R_t),\\
\tilde R_{t+1} &= \frac{1}{T_I}\tilde I_t + \left(1 - \frac{1}{qT_I}\right) \tilde R_t =: \tilde G(\tilde I_t, \tilde R_t).
\end{split}
\label{LocalGRRR}
\end{equation}
Recall that the tilde denotes values associated with the locally homogeneous GRR.

We derived this GRR by focusing on early dynamics.  But how does the non-uniform assumption of the infection front affect the stability using the locally homogeneous GRR compared with the globally homogeneous GRR?
\begin{theorem}
If $\tB_t^{\S,\I} \to \Omega$ as $t\to +\infty$ and $\alpha$ is a fixed point of $H$, then $\alpha$ is a fixed point of $\tilde H$.  Moreover, $\alpha$ has the same stability conditions for $H$ and $\tilde H$.
\label{LocalSIRTHM}
\end{theorem}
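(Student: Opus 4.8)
\textbf{Proof strategy for Theorem \ref{LocalSIRTHM}.}
The idea is that $\tilde H$ differs from $H$ only through the replacement of $\mu(\Omega)$ by $\mu(\tB_t^{\S,\I})$ together with the extra normalization $\mu(\tB_t^{\S,\I})/\mu(\Omega)$, and both of these corrections vanish as $\tB_t^{\S,\I}\to\Omega$. I would make this precise by treating the locally homogeneous map as a one-parameter family: writing $m:=\mu(\tB_t^{\S,\I})$, set
\[
\tilde H(I,R;m)=(N-I-R)\Bigl\{1-\bigl(1-\tfrac{\mu(\N)}{m}\bigr)^{I}\Bigr\}\tfrac{m}{\mu(\Omega)}(1-\kappa)+\bigl(1-\tfrac1{T_I}\bigr)I,
\]
so that the recurrence \eqref{LocalGRRR} reads $\tilde I_{t+1}=\tilde H(\tilde I_t,\tilde R_t;m_t)$ with $m_t=\mu(\tB_t^{\S,\I})$, while $\tilde G\equiv G$. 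The first step is then the algebraic identity $\tilde H(I,R;\mu(\Omega))=H(I,R)$: evaluating at $m=\mu(\Omega)$ makes the normalization equal to $1$ and recovers \eqref{Global GRRI} exactly.

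The second step is to show the limit is well behaved. From $\tB_t^{\S,\I}\to\Omega$ and continuity (indeed monotonicity, since the front is a nested family) of Lebesgue measure, $m_t\uparrow\mu(\Omega)$; moreover $m_t\ge\mu(\N)=\pi\rho_0^2>0$ because $\tB_0^{\S,\I}=\N^1$ and the front always contains at least one infectivity neighborhood. Since $\mu(\N)\ll\mu(\Omega)$, for all $t$ large enough $m_t$ lies in a compact interval $[c,\mu(\Omega)]$ with $c>\mu(\N)$, on which $(I,R,m)\mapsto\tilde H(I,R;m)$ is smooth and all its $(I,R)$-partial derivatives are bounded and continuous in $m$ (the only delicate term, $(1-\mu(\N)/m)^{I}$, whose derivatives involve $\ln(1-\mu(\N)/m)$, is harmless once we stay away from $m=\mu(\N)$). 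Hence $\tilde H(\cdot,\cdot;m_t)\to H$ in $C^{1}$ on the compact physical domain $D=\{I,R\ge0,\ I+R\le N\}$, and the Jacobian $\tilde J_t$ of $(\tilde H(\cdot;m_t),\tilde G)$ converges entrywise to the Jacobian $J$ of $(H,G)$.

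The third step draws the conclusions. Because $\tB_t^{\S,\I}\to\Omega$, the asymptotic (well-mixed) dynamics of \eqref{LocalGRRR} is governed by $(H,G)$, so any fixed point $\alpha=(I^\ast,R^\ast)$ of $(H,G)$ is a fixed point of this limiting map --- the sense in which $\alpha$ is ``a fixed point of $\tilde H$'' (and literally so if $\tB_t^{\S,\I}=\Omega$ for all large $t$). For stability, the linear classification is read from the eigenvalues of the Jacobian at $\alpha$, exactly as in Section \ref{FixedPointSIRSection} (cf.\ \cite{LinSegel}). Since $\tilde J_t(\alpha)\to J(\alpha)$ and eigenvalues depend continuously on matrix entries, the eigenvalues of $\tilde J_t(\alpha)$ converge to those of $J(\alpha)$. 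In every case occurring here $\alpha$ is hyperbolic for $H$ --- for $(0,0)$ one has $|1-1/(qT_I)|<1$ and $-N(1-\kappa)\ln(1-\mu(\N))-1/T_I>1$, and similarly for the nontrivial fixed point --- so no eigenvalue sits on the unit circle, and for all large $t$ the eigenvalues of $\tilde J_t(\alpha)$ lie on the same side of the unit circle as those of $J(\alpha)$. Therefore $\alpha$ has the same stability type for $\tilde H$ as for $H$.

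The main obstacle I anticipate is the genuinely non-autonomous character of $\tilde H$: unless $\tB_t^{\S,\I}$ reaches $\Omega$ in finite time, ``fixed point'' and ``stability'' for $\tilde H$ only make sense through its limiting autonomous system, and tying the two together rigorously invokes the theory of asymptotically autonomous difference equations (persistence of a hyperbolic fixed point under asymptotically vanishing perturbations). A secondary point is guaranteeing the $C^{1}$ convergence is uniform, which is why I separate $m_t\to\mu(\Omega)$ from $\liminf_t m_t>\mu(\N)$, so as to work on a compact parameter interval away from the logarithmic singularity. Finally, the statement implicitly assumes $\alpha$ is hyperbolic; in a degenerate case the conclusion would require higher-order analysis, so ``same stability conditions'' should be understood in the hyperbolic regime realized in this model.
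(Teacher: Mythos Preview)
Your proposal is correct and follows essentially the same line as the paper's own proof: both argue that as $\mu(\tB_t^{\S,\I})\to\mu(\Omega)$ the map $\tilde H$ collapses to $H$, and that this convergence passes to the Jacobian, so fixed points and their linear stability coincide. Your formalization via the one-parameter family $\tilde H(I,R;m)$ and the explicit $C^1$-convergence argument is more careful than the paper's (which simply takes limits inside the expression and asserts that the partials converge), and you correctly flag the non-autonomous subtlety and the implicit hyperbolicity assumption that the paper leaves unaddressed.
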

\begin{proof}
Suppose that $\lim_{t\to +\infty}I_t = \alpha$ and suppose that $\lim_{t\to +\infty}\tilde I_t$ exists.  
Then, since $\mu\left(\tB_t^{\S,\I}\right) \to \mu(\Omega)$ as $t \to + \infty$, we have that 
$\lim_{t\to +\infty}\left(1-\frac{\mu(\N)}{\mu\left(\tB_t^{\S,\I}\right)}\right)^{\tilde I_t} =
\lim_{t\to +\infty}\left(1-\frac{\mu(\N)}{\mu(\Omega)}\right)^{\tilde I_t}$.
Plugging into \eqref{Global GRRI} and taking the limit,
\begin{small}
\begin{align*}
\lim_{t\to +\infty}\tilde H(\tilde I_t)
&=\lim_{t\to +\infty}\left[ (N-\tilde I_t)\left(1 - \frac{\mu(\N)}{\mu(\Omega)}\right)^{\tilde I_t}(1-\kappa)+\left(1 - \frac{1}{qT_I}\right)\tilde I_t \right]\\
&=\lim_{t\to +\infty} H(\tilde I_t) = \alpha.
\end{align*}
\end{small}
Moreover, $\mu(\tB_{\S,\I}^t)\to \mu(\Omega)$ for fixed $\alpha$ and
it is clear that
$\frac{\partial \tilde H}{\partial \tilde I}\Big|_\alpha \to \frac{\partial H}{\partial I}\Big|_\alpha$, 
$\frac{\partial \tilde H}{\partial \tilde R}\Big|_\alpha \to \frac{\partial H}{\partial R}\Big|_\alpha$,
$\frac{\partial \tilde G}{\partial \tilde I}\Big|_\alpha \to \frac{\partial G}{\partial I}\Big|_\alpha$, and 
$\frac{\partial \tilde G}{\partial \tilde R}\Big|_\alpha \to \frac{\partial G}{\partial R}\Big|_\alpha$.
Since the stability condition depends on the Jacobian, and the Jacobian of the locally homogeneous region approaches the Jacobian of the globally homogeneous region as $t\to +\infty$, the long term stability conditions must be the same.
\end{proof}

From Theorem \ref{LocalSIRTHM} we know that $(\tilde H, \tilde G)$ has the same fixed points as found in Section \ref{FixedPointSIRSection} with the same stability conditions.  We thus reduced the problem to capturing an explicit formula for $\mu\left( \tB_t^{\S,\I} \right)$. 
Before, we made the simplifying assumption that the infected agents were distributed uniformly throughout the region, so we did not need to incorporate any spatial characteristics into the globally homogeneous GRR.  Now, we need to capture the infection front dynamics in order to explicitly calculate the area of the infectivity region, $\mu\left( \tB_t^{\S,\I} \right)$, in the locally homogeneous case.

\begin{figure}[htb!]
    \centering
    \includegraphics[width=5cm]{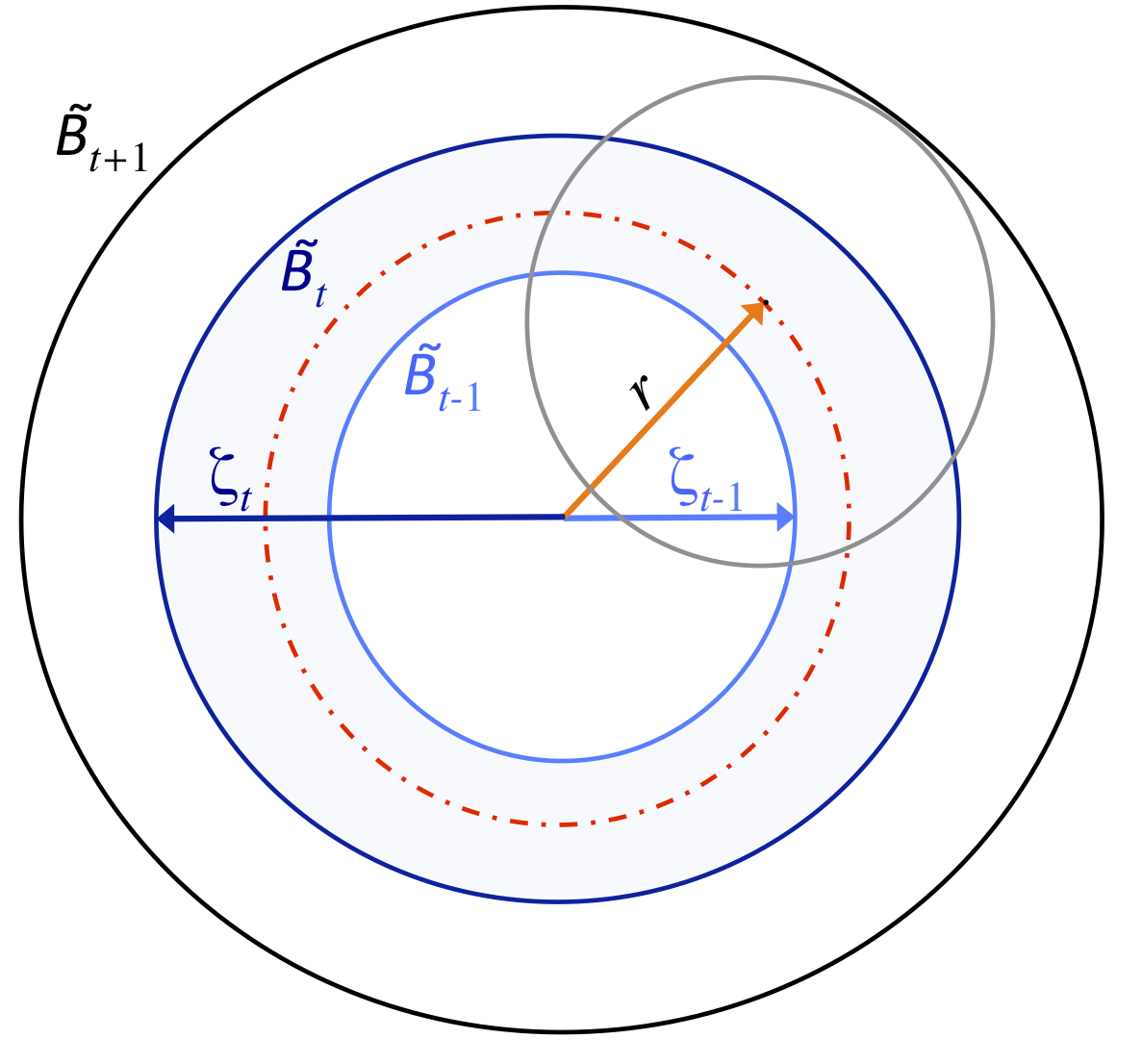} 
    \caption{Diagram demonstrating how the locally homogeneous $\tB_{t+1}^{\S,\I}$ region depends on $\tB_t^{\S,\I}$ and $\tB_{t-1}^{\S,\I}$ regions.  The initially infected agent at $t=0$ is located in the center of the $\tB_t^{\S,\I}$ regions, which expand outward with radii $\zeta_t$.  We assume newly infected agents lie on the radial center of mass of the region $\tB_t^{\S,\I}\setminus \tB_{t-1}^{\S,\I}$, denoted with the dashed line, which is a distance $r$ from the initially infected agent.}
    \label{B_kImage}%
\end{figure}
For our formula, we will assume that newly infected agents are expected to be in the region $\tB_t^{\S,\I}\setminus \tB_{t-1}^{\S,\I}$ and are moving a fixed distance $\Delta r$. Further, we assume the expected location of the newly infected agents will lie on the circle that is the radial center of mass of $\tB_t^{\S,\I}\setminus \tB_{t-1}^{\S,\I}$, a distance $r$ from the initially infected agent location as shown in Figure \ref{B_kImage}.  The radius of the infectivity front region $\tB_t^{\S,\I}$ at time $t$ is denoted as $\zeta_t$.  We then have the following expected radius of $\tB_{t+1}^{\S,\I}$: 
\begin{equation}
\zeta_{t+1} = \rho_0 + \sqrt{
\frac{(\zeta_t + \delta_{out}(\Delta r))^2 + (\zeta_{t-1}-\delta_{in}(\Delta r))^2}{2}},
\label{Bkeq}
\end{equation}
where $\delta_{in}$ and $\delta_{out}$ are functions of the expected distance an infected agent travels towards the center of $\tB_t^{\S,\I}$ and out of the region $\tB_t^{\S,\I}$, respectively. 
For our simulations and derivation, we assume 
$\delta_{out}=\Delta r$ and $\delta_{in}=\Delta r$.  
Even though the simulation is a Markov process, our analytical solution, which calculates the area $\mu\left( \tilde{B}_{t+1}^{\S,\I}\right)$ using $\zeta_{t+1}$, is not, since it relies on information at iterations $t$ and $t-1$.  Clearly, the simulation is a Markovian process but the GRR does not have to be for this analysis. In fact, it  can belong to a larger class of processes than the underlying AB model.

By the following theorem, we know that equation (\ref{Bkeq}) satisfies the premise of Theorem \ref{LocalSIRTHM}.   
\begin{theorem}
If $\rho_0>0$ and for all $j$ such that $\mu\left( \tB_j^{\S,\I}\right) \neq \mu(\Omega)$ there exists an agent $k$ with $s_j^k = \S$ such that $\x_j^k \notin \tB_j^{\S,\I}$, then $\exists \hat t\in \mathbb{N}$ such that $\mu\left(\tB_t^{\S,\I}\right) = \mu(\Omega)$ for all $t>\hat{t}$ with radius $\zeta_{t}$ as defined in (\ref{Bkeq}). 
\label{Bkthm}
\end{theorem}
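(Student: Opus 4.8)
The plan is to reduce the statement to the divergence of the scalar sequence $\{\zeta_t\}$ defined by \eqref{Bkeq}, and then to translate that divergence into full coverage of $\Omega$ by the infection front. First I would record the geometric reduction: in the locally homogeneous model $\tB_t^{\S,\I}$ is the disk of radius $\zeta_t$ about the initially infected agent at $(0.5,0.5)$ intersected with $\Omega$, so $\mu(\tB_t^{\S,\I})=\mu(\Omega)$ exactly when that disk contains $\Omega$, i.e.\ when $\zeta_t\ge R_\Omega$, where $R_\Omega=\sqrt{(1/2)^2+(1/2)^2}=1/\sqrt2$ is the largest distance from the centre of the unit square to a point of $\Omega$. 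I would also note here the role of the hypotheses: while $\mu(\tB_t^{\S,\I})\ne\mu(\Omega)$, the assumption supplies a susceptible agent outside $\tB_t^{\S,\I}$, which is precisely the situation in which the front‑propagation recurrence \eqref{Bkeq} is the governing rule for $\zeta_{t+1}$, and $\rho_0>0$ forces each new annulus $\tB_t^{\S,\I}\setminus\tB_{t-1}^{\S,\I}$ to have strictly positive width. So it suffices to show that iterating \eqref{Bkeq} drives $\zeta_t$ above $R_\Omega$ and keeps it there.

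The core is a lower bound on \eqref{Bkeq} that exploits its two‑step memory. With $\delta_{in}=\delta_{out}=\Delta r$ and convexity of $x\mapsto x^2$ (the power–mean inequality),
\[
\frac{(\zeta_t+\Delta r)^2+(\zeta_{t-1}-\Delta r)^2}{2}\;\ge\;\left(\frac{(\zeta_t+\Delta r)+(\zeta_{t-1}-\Delta r)}{2}\right)^2=\left(\frac{\zeta_t+\zeta_{t-1}}{2}\right)^2\ge 0,
\]
so taking square roots gives $\zeta_{t+1}\ge\rho_0+\tfrac12(\zeta_t+\zeta_{t-1})\ge\rho_0+\min(\zeta_t,\zeta_{t-1})$. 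I would stress that this two‑index bound is essential: discarding either term under the square root leaves only $\zeta_{t+1}\gtrsim\rho_0+\zeta_t/\sqrt2$, whose iteration merely bounds $\zeta_t$ below by the finite fixed point $\rho_0/(1-1/\sqrt2)$, which is far short of $R_\Omega$ and certainly does not diverge.

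Setting $c_t:=\min(\zeta_t,\zeta_{t-1})$, the bound yields $\zeta_{t+1}\ge\rho_0+c_t$, and since then $\zeta_{t+1},\zeta_t\ge c_t$ it also yields $\zeta_{t+2}\ge\rho_0+\tfrac12(\zeta_{t+1}+\zeta_t)\ge\rho_0+c_t$; hence $c_{t+2}=\min(\zeta_{t+2},\zeta_{t+1})\ge c_t+\rho_0$. Iterating along the two parity classes of indices gives $c_t\to+\infty$ (indeed $c_t\ge c_{t_0}+\lfloor(t-t_0)/2\rfloor\rho_0$ for $t\ge t_0$), and since $c_t\le\zeta_t$ this forces $\zeta_t\to+\infty$. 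Moreover each parity class of $c_t$ is strictly increasing by at least $\rho_0$ per step, so once $c_t$ and $c_{t+1}$ both exceed $R_\Omega$ they stay above $R_\Omega$ thereafter; choose $\hat t$ to be such an index. Then for every $t>\hat t$ we have $\zeta_t\ge c_t\ge R_\Omega$, and by the geometric reduction $\mu(\tB_t^{\S,\I})=\mu(\Omega)$.

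The one remaining point is the bookkeeping between ``\eqref{Bkeq} is active'' and ``coverage has not yet occurred,'' which is where the hypothesis is actually used, and I expect this to be the main obstacle to state cleanly (the recurrence analysis itself is short once the right bound is in hand). I would close it by contradiction: suppose no such $\hat t$ exists, so $\mu(\tB_t^{\S,\I})\ne\mu(\Omega)$ holds infinitely often; then by the hypothesis a susceptible agent lies outside $\tB_t^{\S,\I}$ at each such step, so \eqref{Bkeq} governs $\zeta_{t+1}$ there, and in the regime where this persists the divergence above gives $\zeta_t\ge R_\Omega$ for all large $t$, i.e.\ $\mu(\tB_t^{\S,\I})=\mu(\Omega)$ — contradicting the assumption — while once $\mu(\tB_t^{\S,\I})=\mu(\Omega)$ is reached it persists, since the cumulative infection front on the bounded domain $\Omega$ cannot expand beyond $\Omega$. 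This contradiction establishes the existence of $\hat t$ as claimed, and together with Theorem~\ref{LocalSIRTHM} it confirms that \eqref{Bkeq} meets that theorem's hypothesis.
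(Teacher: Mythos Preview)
Your argument is correct. The key inequality $\zeta_{t+1}\ge\rho_0+\tfrac12(\zeta_t+\zeta_{t-1})$ via the quadratic--arithmetic mean, followed by the two--parity march $c_{t+2}\ge c_t+\rho_0$ with $c_t=\min(\zeta_t,\zeta_{t-1})$, cleanly forces $\zeta_t\to\infty$ and hence eventual coverage of the bounded square.

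By comparison, the paper does not give a proof at all: immediately after the theorem it simply remarks that the result is ``clear, since $\Omega$ is bounded.'' Your write--up therefore supplies what the paper omits, and the route you take is entirely consistent with the paper's intended one--line justification---you are making precise the idea that the front radius grows without bound while $\Omega$ has finite diameter. Two minor points worth noting. First, your careful discussion of when ``\eqref{Bkeq} is active'' and the closing contradiction argument go beyond anything the paper addresses; in the paper's setup $\zeta_t$ is simply \emph{defined} by the recurrence \eqref{Bkeq} for all $t$, so the agent--level hypothesis functions more as a modeling proviso (the front should not stall because the epidemic dies) than as a switch governing whether the recurrence applies. You can therefore streamline the final paragraph without loss. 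Second, your explicit aside that the one--term bound $\zeta_{t+1}\gtrsim\rho_0+\zeta_t/\sqrt2$ is insufficient is a nice observation the paper never makes; it justifies why the two--step memory of \eqref{Bkeq} is actually needed for the conclusion.
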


The above theorem essentially states that if the infection does not ``die out,'' then the $\S \to \I$ transition neighborhood, $\tilde{B}_j^{\S,\I}$, eventually covers the entire region of interest $\Omega$.  
The proof is clear, since $\Omega$ is bounded.
As we will see in Section \ref{Sec: Numerical}, we are able to approximate early behavior more accurately with the locally homogeneous GRR, while still being able to evaluate and determine the stability of fixed points with the simpler equations of the globally homogeneous GRR.

\subsection{Extensions to more complex neighborhoods}\label{inclusion}
\begin{figure}
\begin{center}
     \includegraphics[width=0.35\textwidth]{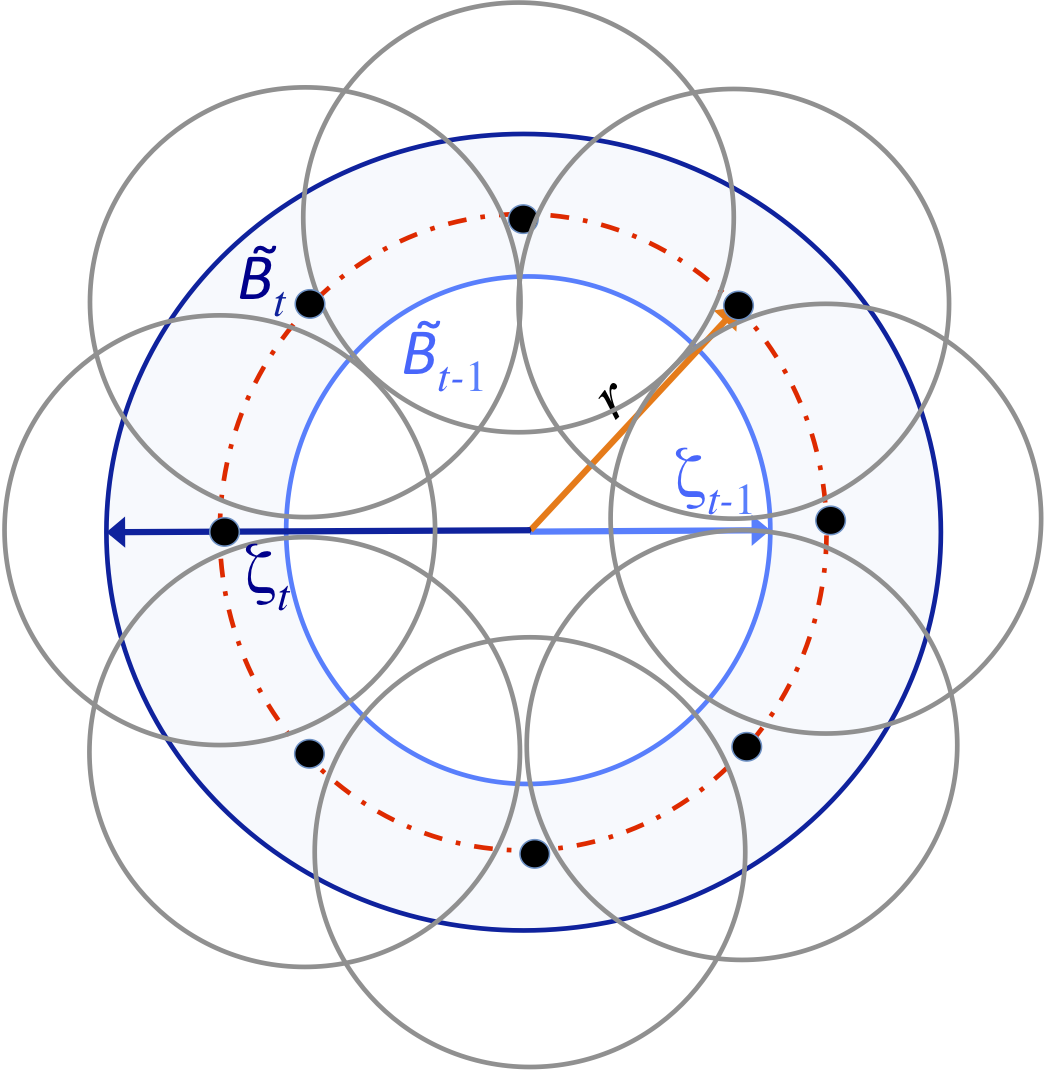}
   \end{center}
  \caption{Infectivity neighborhood depends on the number $n$ of newly infected agents. The radial center of mass of the region $\tB_t^{\S,\I} \setminus \tB_{t-1}^{\S,\I}$ (dashed line) is a distance $r$ from the initially infected agent. Newly infected agents are  distributed uniformly along the radial center of mass and the new infection front radius $\zeta_{t+1}$ will depend on the total area of the infectivity neighborhoods outside $\tB_t^{\S,\I}$.}
  \label{InfectionOverviewInPaper}
\end{figure}
As long as the infection front in the E-AB approaches $\partial \Omega$ and $\tB_t^{\S,\I} \to \partial \Omega$, Theorem \ref{LocalSIRTHM} holds.  One could derive a formula for $\tB_t^{\S,\I}$ that more closely approximates the initial phases of the infection spread.  
Rather than assume that the new infection front extends approximately $\Delta r$ from the mean center of mass as in Figure \ref{B_kImage}, we can assume that the infectivity radius expansion depends on the number of newly infected agents in $\tB_t^{\S,\I} \setminus \tB_{t-1}^{\S,\I}$, as shown in Figure \ref{InfectionOverviewInPaper}.

Further details regarding the calculation and derivation for this case of $\tB_t^{\S,\I}$ can be found in  Appendix \ref{Section:Sparse}.  In this example, we illustrate that, by relaxing assumptions, one can derive other expressions calculating the area of the infectivity front radius $\zeta_t$ that may decrease the error of the locally homogeneous GRR during the early stages of the epidemic.  Moreover, we know that as long as the new formulation of $\zeta_t$ maintains the suppositions of Theorem \ref{LocalSIRTHM}, the long term dynamics will be captured.

\section{Numerical results}\label{eca}
\label{Sec: Numerical}
Results for the E-AB model with $N=10000$ initialized agents are shown in Figure 
\ref{SIRCA_Simulations}.  
Note that each simulation curve on the plot is the average of 1000 E-AB simulations whereas the curves based on the Globally 
\begin{figure}
    \centering
    \subfloat[$N=10000, \kappa=0.6, T_I=T_R=30$]{{
    \includegraphics[width=0.325\textwidth]{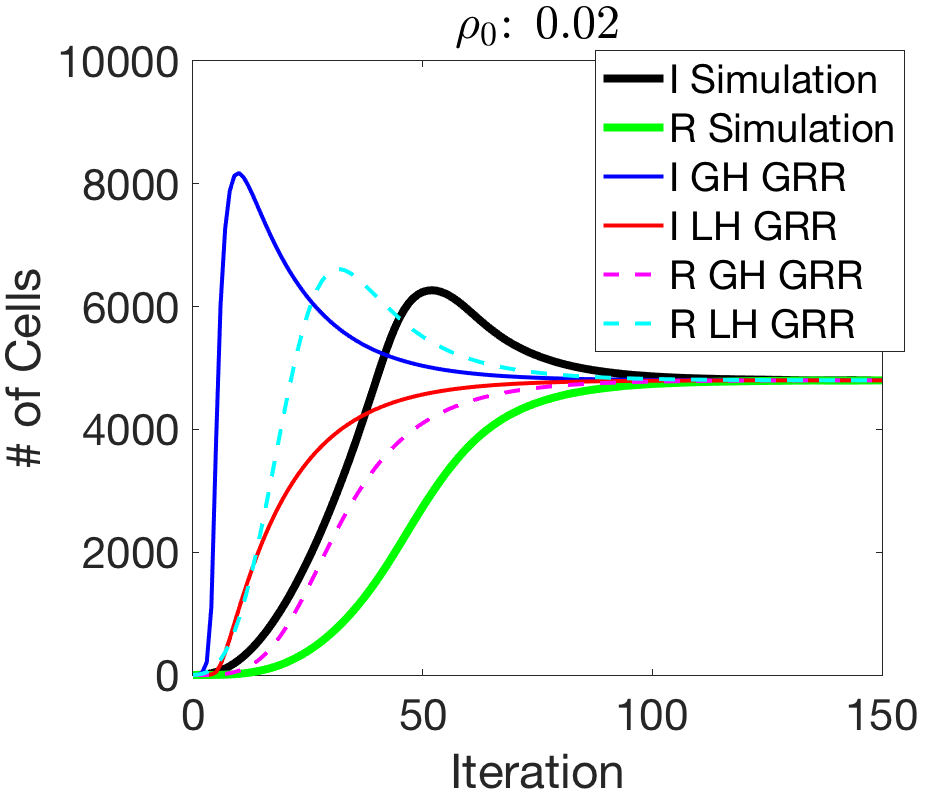}
    \includegraphics[width=0.3\textwidth]{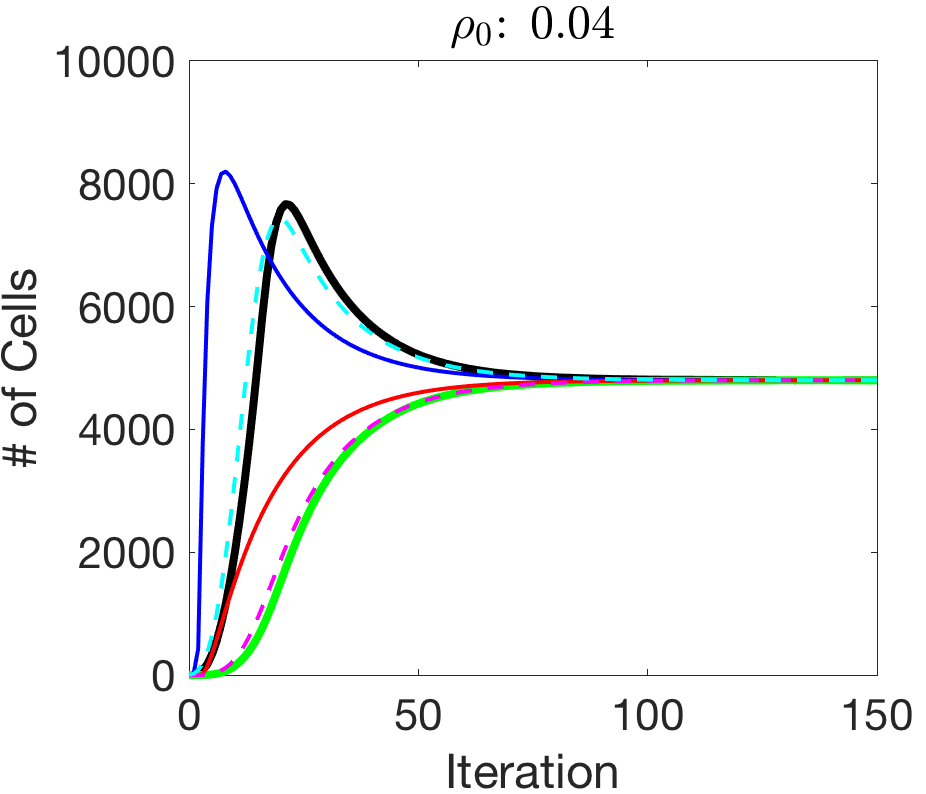}
    \includegraphics[width=0.3\textwidth]{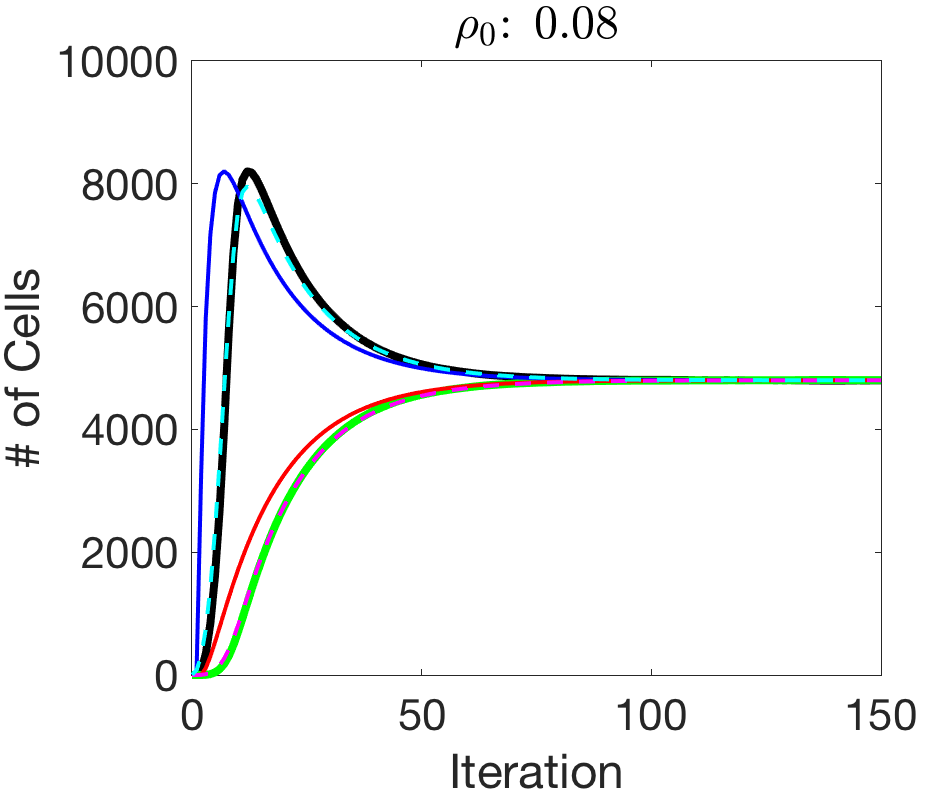}
    }} \\
    \subfloat[$N=10000, \kappa=0.8, T_I=T_R=30$]{{
            \includegraphics[width=0.325\textwidth]{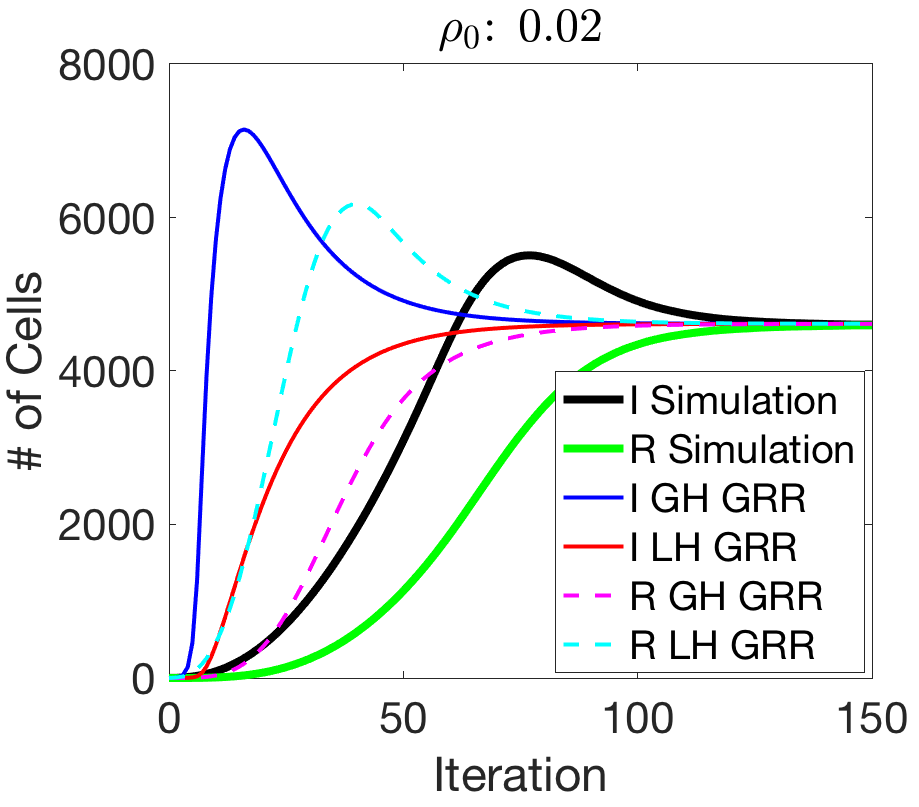}
    \includegraphics[width=0.3\textwidth]{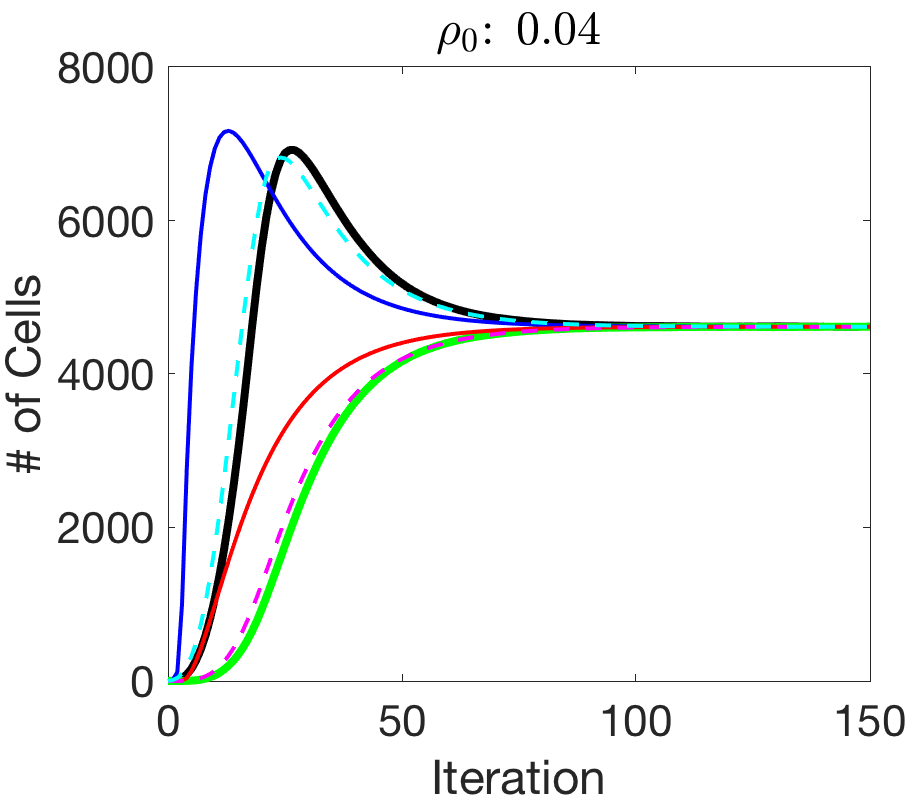}
    \includegraphics[width=0.3\textwidth]{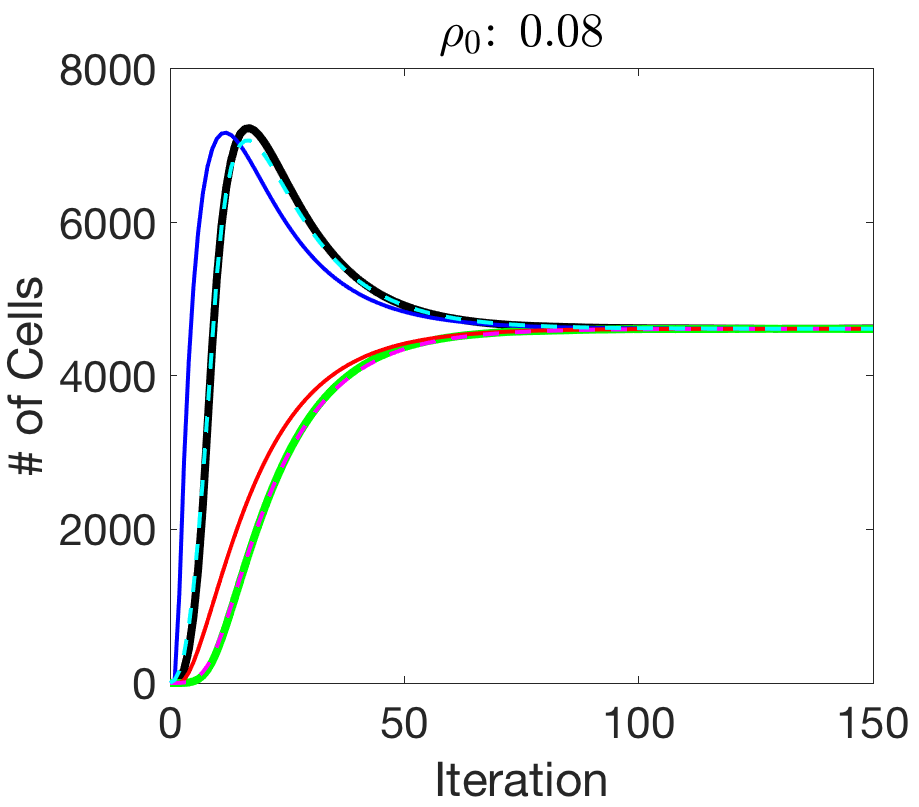}
    }} \\
    \subfloat[$N=10000, \kappa=0.8, T_I=30, T_R=45$]{{
            \includegraphics[width=0.325\textwidth]{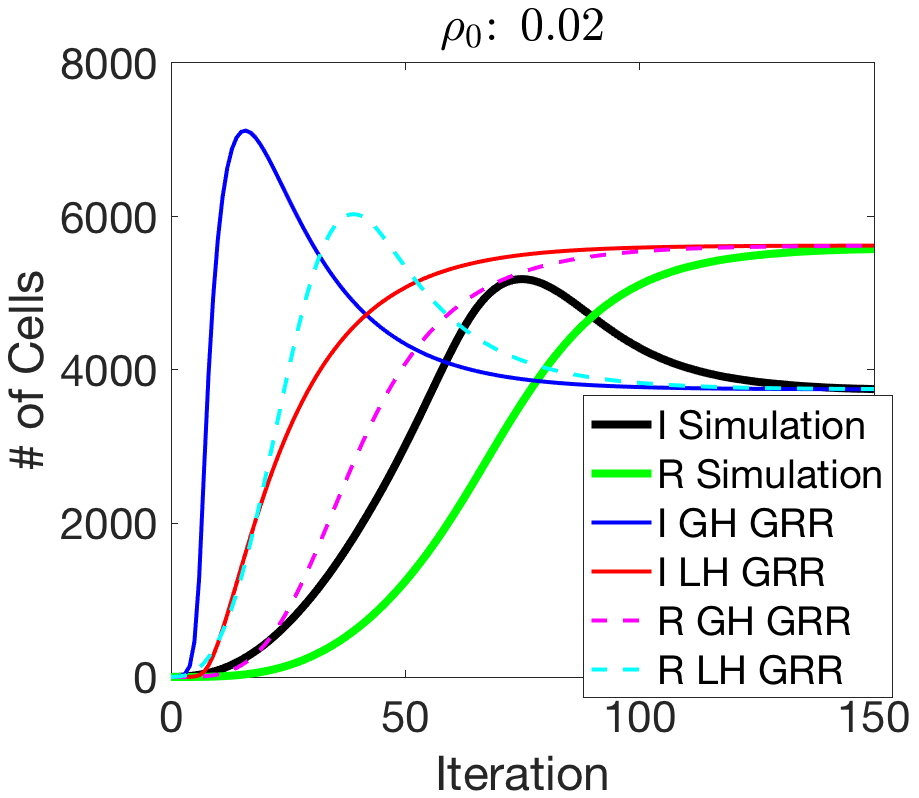}
    \includegraphics[width=0.3\textwidth]{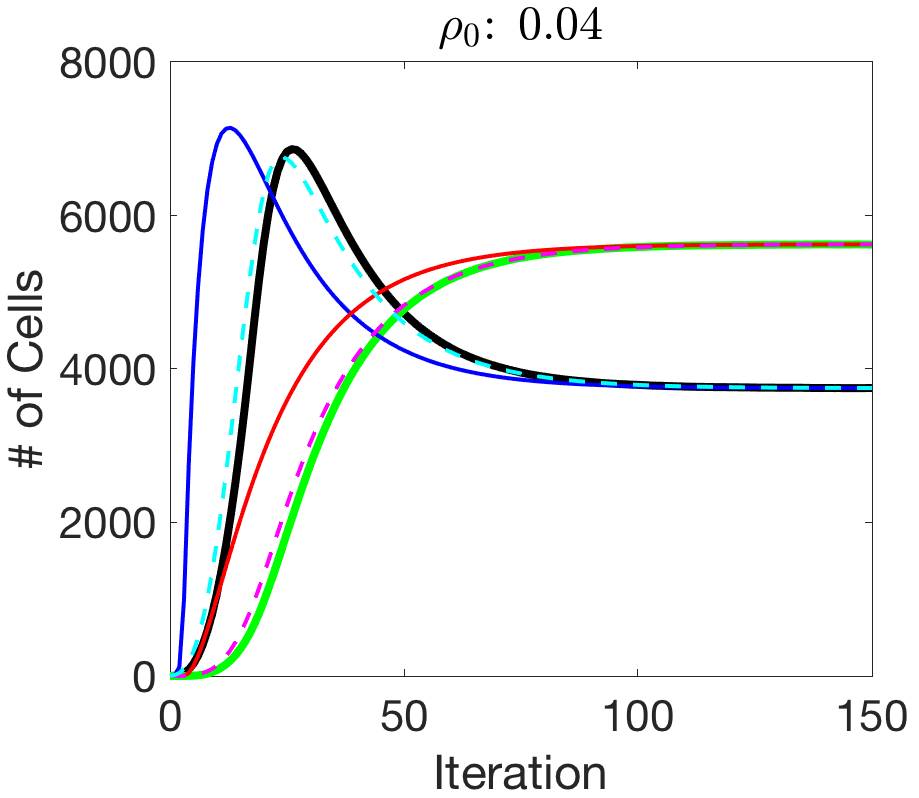}
    \includegraphics[width=0.3\textwidth]{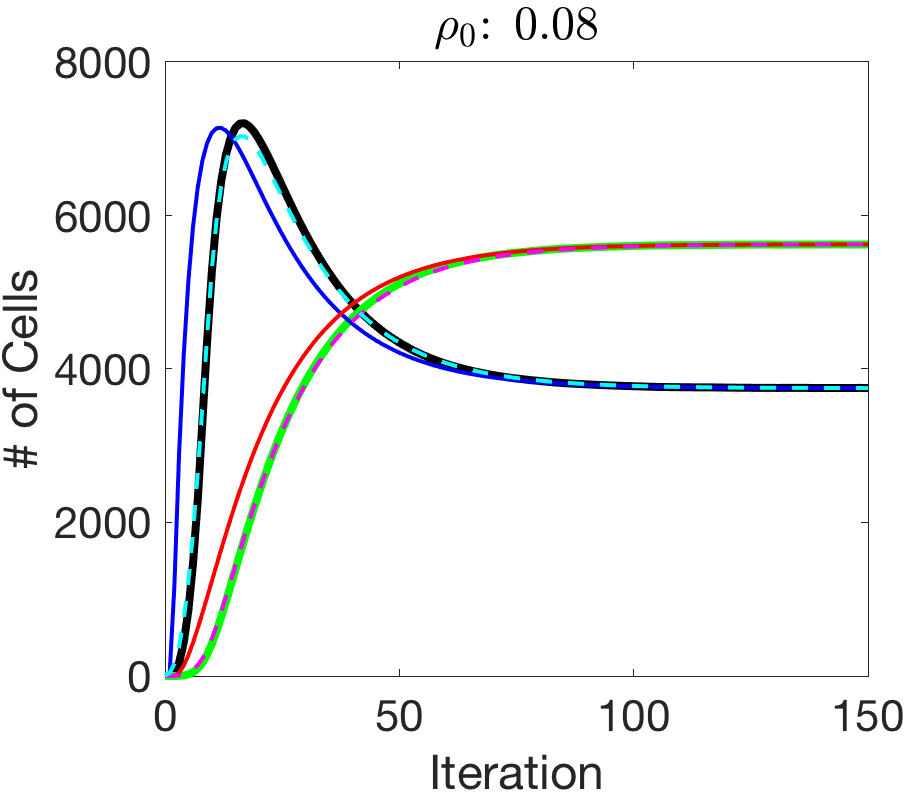}
    }}
    \caption{Comparing the average value of 1000 E-AB simulations with results from the globally and locally homogeneous  GRRs calculated from \eqref{Global GRRR} and \eqref{LocalGRRR}, respectively. The time to remain infected is set to $T_I=30$, while the recovery time is $T_R=30$ in (a)-(b) and $T_R=45$ in (c). Each plot corresponds to a different  infectivity radii parameter $\rho_0$ with contact tolerance $\kappa=0.6$ in (a) and $\kappa=0.8$ in (b)-(c). Note that for the globally homogeneous case (labeled as GH GRR), the infectivity neighborhood has fixed radius of $\rho_0$, whereas the locally homogeneous case (labeled as LH GRR) has a variable radii $\zeta_t$ at each iteration $t$ as given in \eqref{Bkeq}, which is a function of $\rho_0$.}
    \label{SIRCA_Simulations}
\end{figure}
and locally homogeneous GRRs are from solving \eqref{Global GRRR} and \eqref{LocalGRRR}, respectively.  
In the Figure, we observe agreement of long term behavior of the simulations with both the globally and locally homogeneous GRRs. For example, in Figure \ref{SIRCA_Simulations}, the left hand column of each row corresponds to the case where $\rho_0 = 0.02$. In the left column of (c), the average of the E-AB simulations for the fixed points or long term behavior is 3877.1 infected agents and 5790.9 recovered agents. Upon calculation, the relative error between the simulated fixed points and the GRR fixed points is $\mathcal{O}\left(10^{-4}\right)$.  Further, the early time dynamics of the infected and recovered populations with the GRR estimates have behavior similar to that of the E-AB simulations. 

However, the early infection dynamics of the GRRs do not exactly match the simulations for all cases. In Figure \ref{SIRCA_Simulations}(a)-(b), for a contact tolerance $\kappa=0.6$ and $\kappa=0.8$ (characterizing how easily an agent becomes infected), we observe that as the infectivity radius increases, the GRRs are able to more accurately capture the early time dynamics of the E-AB simulations. For an infectivity neighborhood of radius $\rho_0=0.02$, it is likely that there is not a sufficient number of agents in the region to accurately capture the early time dynamics of infectivity.  We do observe that the locally homogeneous GRR provides a better approximation to the E-AB simulations in comparison to the globally homogeneous GRR. Similar trends are observed in Figure \ref{SIRCA_Simulations}(c), where the recovery time $T_R$ is increased. 

To explicitly define how much ``better'' the locally homogeneous GRR is relative to the globally homogeneous GRR at capturing the E-AB dynamics for a particular parameter set, we need to develop a metric. 
We have a sequence of points, $(1,U_1),(2,U_2),...,(M,U_M)$, from the simulation, where $U_t$, as previously defined, is the number of agents in state $\U$ at iteration $t$ for $t=1,\ldots,M$.  
By linear spline interpolation of these points we will construct a function $g(t)$.  
We also have a sequence of points, $(1, \hat{U}_1),(2, \hat{U}_2),\dots,(M,\hat{U}_M)$, from the GRR.  
Given the fact that some of the error is due to translation and that the number of agents is much larger than the number of iterations, we need to normalize the data.  
We scale the $t$-values so that $t_i\leftarrow t_i/M$ and $\hat{t}_i \leftarrow \hat{t}_i/M$.  
Additionally, we let $\gamma = \max\{U_1,U_2,...,U_M\}$ and scale the $U$-values so that $U_i\leftarrow U_i/\gamma$ and $\hat{U}_i \leftarrow \hat{U}_i/\gamma$.  
Our error metric $\nu$ in Equation \eqref{metric2} is a normalized least square, evaluating the average distance of each scaled GRR estimation to the scaled E-AB simulated curve.  Details of the derivation for the error metric can be found in Appendix \ref{MetricDeriv}. 

\begin{table}[H]
\begin{center}
\caption{Error between GRR and 1000 E-AB simulations using metric $\nu$ from Equation \eqref{metric2} for infectivity radii $\rho_0$, contact tolerances $\kappa$, time in infected state $T_I$, and time in recovered state $T_R$.}
{\tiny
\subfloat[Error for number of infected agents.]{
  \begin{tabular}{|c||c|c|c|c||c|c|c|c|} \hline
  	\multirow{2}{*}{$N=10000$} 
		& \multicolumn{4}{c||}{$T_I=T_R=30$} 
		& \multicolumn{4}{c|}{$T_I=30$, $T_R=45$} \\            
	\cline{2-9}
  		& \multicolumn{2}{c|}{$\kappa=0.6$}
  		& \multicolumn{2}{c||}{$\kappa=0.8$}
  		& \multicolumn{2}{c|}{$\kappa=0.6$}
  		& \multicolumn{2}{c|}{$\kappa=0.8$} \\
  	\cline{2-9}
  	& Global & Local & Global & Local & Global & Local & Global & Local \\
  	\hline
		$\rho_0=0.02$ & 0.036606 & 0.009573 & 0.052660 & 0.020494 & 
			0.035438 & 0.010552 & 0.059061 & 0.021560 \\      
		\hline
		$\rho_0=0.04$ & 0.008092 & 0.001398 & 0.007514 & 0.001237 & 
			0.008595 & 0.001687 & 0.008653 & 0.001444 \\      
		\hline
		$\rho_0=0.08$ & 0.004019 & 0.000652 & 0.003252 & 0.000360 & 
			0.004686 & 0.000692 & 0.003657 & 0.000391 \\      
		\hline
		$\rho_0=0.16$ & 0.002030 & 0.000798 & 0.001589 & 0.000545 & 
			0.002352 & 0.000859 & 0.001827 & 0.000608 \\      
		\hline
  \end{tabular} \label{dataTableI}} \\
\subfloat[Error for number of recovered agents.]{
\begin{tabular}{|c||c|c|c|c||c|c|c|c|}
	\hline
	\multirow{2}{*}{$N=10000$} 
		& \multicolumn{4}{c||}{$T_I=T_R=30$} 
		& \multicolumn{4}{c|}{$T_I=30$, $T_R=45$} \\            
	\cline{2-9}
  		& \multicolumn{2}{c|}{$\kappa=0.6$}
  		& \multicolumn{2}{c||}{$\kappa=0.8$}
  		& \multicolumn{2}{c|}{$\kappa=0.6$}
  		& \multicolumn{2}{c|}{$\kappa=0.8$} \\
  	\cline{2-9}
  	& Global & Local & Global & Local & Global & Local & Global & Local \\
  	\hline
	$\rho_0=0.02$ & 0.022055 & 0.009537 & 0.027853 & 0.015889 & 
		0.028273 & 0.012485 & 0.036781 & 0.021142 \\      
	\hline
  	$\rho_0=0.04$ & 0.008436 & 0.000831 & 0.008046 & 0.000817 & 
  		0.010412 & 0.000895 & 0.010120 & 0.000937 \\      
	\hline
	$\rho_0=0.08$ & 0.003581 & 0.000126 & 0.003084 & 0.000104 & 
		0.004247 & 0.000138 & 0.003694 & 0.000103 \\      
	\hline
	$\rho_0=0.16$ & 0.001247 & 0.000176 & 0.001117 & 0.000154 & 
		0.001350 & 0.000189 & 0.001260 & 0.000184 \\      
	\hline
\end{tabular} \label{dataTableR} } 
} 
\end{center}
\end{table}

We see from Figure \ref{SIRCA_Simulations}, as well as Table \ref{dataTableI} and Table \ref{dataTableR}, that the locally homogeneous GRR approaches the E-AB with less error than the Global GRR.  
Despite the scaling and translation differences, the general behavioral trends of the Global GRR and locally homogeneous GRR emulate the E-AB agent state densities.

The surface plot in Figure \ref{fig: ErrorSurf} shows the mean error between the locally homogeneous GRR and the E-AB simulations with respect to the number of infected individuals. 
The horizontal axis represents  the expected number of susceptible agents in the initial infected agents neighborhood and the vertical axis represents the contact tolerance $\kappa$ for the mean error calculated from \eqref{metric2} using 150 iterations of data.  
We fixed the number of agents, $N$, and varied the infectivity radius, $\rho_0$, to generate the error surface plot in Figure \ref{fig: ErrorSurf}; however, one can generate similar error surface plots by fixing $N$ and varying $\rho_0$.  
 
Regardless of the contact tolerance $\kappa$, the GRR approaches the mean simulation's fixed point with only two
expected susceptible agents in the initial infected agent's neighborhood---the lower left subplot \textit{does} approach the fixed point when the simulation runs for more iterations.  However, the early infection front is better captured as the density of agents in the initial infectivity radius increases, as shown in the dark blue bands in the surface plot in Figure \ref{fig: ErrorSurf}.
In the yellow and orange bands of the surface plot  (approximately fewer than four susceptible agents in the initial infectivity radius), we find that in some simulations the infected agents transition to recovered before any susceptible agents become infected, reaching a point early in the epidemic where there are no infected agents.  These cases, in which the epidemic ``dies out,'' skews the expected number of infected, leading to higher error.  
The number of iterations required to match depends inversely on the expected number of susceptible agents in the initial infectivity radius.  For very low density simulations, where fewer than two susceptible agents are expected to be in the initial infectivity radius, the epidemic has a greater chance of ``dying out,'' which makes our current analysis unreliable.

\begin{figure}
\centering
\includegraphics[width=4in]{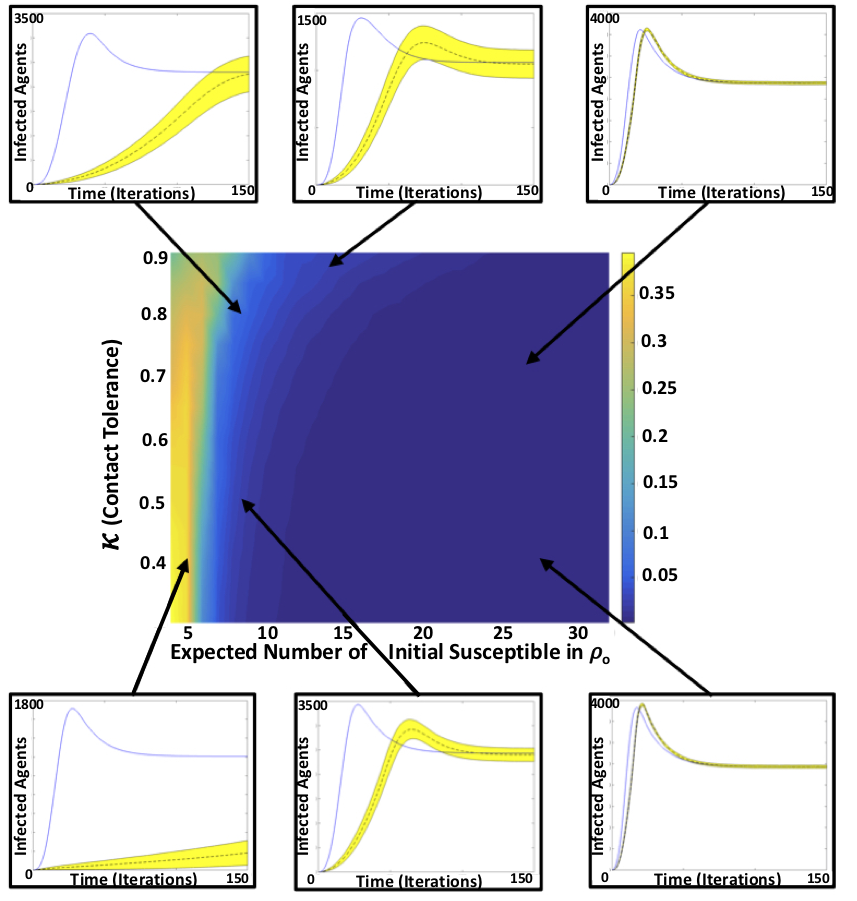}
\caption{The surface plot in the center displays the error between the locally homogeneous GRR and the E-AB simulations with respect to the number of infected agents as a function of the contact tolerance and the expected number of susceptible agents located in the initial infected agent's neighborhood at $t=0$.  The 6 outside plots show the locally homogeneous GRR infected population solution (blue) and the simulated solution (black) with a bound of $\pm$ one standard deviation (yellow).}
\label{fig: ErrorSurf}
\end{figure}

\section{Discussion and conclusions}
In this paper we have introduced a Global Recurrence Rule (GRR) for estimating the state densities for each iteration of a Markovian off-lattice AB model.  We demonstrated its utility with a three state Epidemiological Agent-Based (E-AB) model.  {However, this analysis can be used for any multi-state model of moving individuals that influence the states of other individuals within a neighborhood.} For the E-AB,
 we were able to perform stability analysis using the GRR, as well as determine bounds for the efficacy of the GRR in early stages of the epidemic.  
We note that other analytical techniques, besides stability analysis, can be explored with the GRR, including parameter sensitivity analysis and parameter estimation.  Performing these calculations directly with an AB model would prove to be very computationally expensive.  But, with the computational efficiency of the GRR, one could take existing epidemic data and find the E-AB parameters that best-fit the data. It is well known that the ease of quick and simple calibration of an AB model is critical \cite{Prieto2016}, and this methodology provides a framework to easily handle parameter sweeps.

We identified two classes of E-AB GRR: a globally homogeneous GRR, which assumes that the infected agents are uniformly distributed throughout the domain, and a locally homogeneous GRR, which assumes that there is an infectivity front expanding outward from the initially infected agent.  
With relaxed assumptions, the locally homogeneous GRR performs better than the globally homogeneous GRR with respect to early epidemic prediction.  However, we demonstrated and proved that the much simplified globally homogeneous GRR can predict long-term behavior just as well as the locally homogeneous GRR.

Further, we demonstrated that the GRR is a generalized model, but is not unique in its application---certain choices must be made.  
The generalized GRR definition lends itself to be used as a framework when adapting similar models.  For example, if the E-AB were three-dimensional or if the neighborhoods were a different geometry, then we could use our previously derived GRR equations \ref{Global GRRI}, \ref{Global GRRR}, and \ref{LocalGRRR} while only simply having to derive new expressions for $\mu(\N)$ and $\zeta_t$.  Further, we assumed a constant number of agents, $N$, but we could derive a GRR to calculate $S_t$, $I_t$, and $R_t$ that incorporates a dynamically varying number of agents in much the same way as we did in Section \ref{eca-results}.  The analysis would be similar, only in three-dimensional phase space instead of two-dimensional.

Previous analytical techniques, such as mean-field game theory, assumed the density of agents approaches infinity in order to calculate end behavior \cite{Lasry}.  Other approaches take continuum limits to approximate the dynamics of AB state distributions as a system of PDEs \cite{Chaturapruek,Devitt-Lee,Othmer}, which often corresponds to reducing the scales to infinitesimal time or spatial steps.  In contrast, the GRR analysis allows for and takes into account a finite number of agents in a discrete spatial and temporal domain, which in some cases might more closely reflect the outcome of interest for a particular application.  
Moreover, the GRR incorporates the notion that the state changes are incurred through spatially defined neighborhoods.  Traditional differential equation formulations of SIR models do not incorporate this feature since there is an assumption of a well-mixed population, but we saw that these neighborhoods affected our steady state solutions. Since our GRR analysis incorporates movements of individuals, this causes the contacts between individuals to be dynamic. We note that it is not feasible to do a direct comparison with a differential equation SIR model but that previous CA models with individuals at fixed locations and fixed neighborhoods have done some comparisons for specific cases \cite{Fuentes99}.

Our explicit GRR formulation for the E-AB model ultimately fails when the density of the infected population is zero.  In general, the expansion of the wave of infectivity is caused by the infection spread, rather than the agent movement.   
However, as can be seen in Figure \ref{fig: ErrorSurf}, when agent density is low, the early infectivity front growth relies on agent movement.
For the infection to not ``die out'' in these 
cases, we require an increase in the ratio of the movement size to the neighborhood area to increase the probability that a susceptible agent encounters the infectivity region. {Recent work has studied disease dynamics on dynamic networks \cite{Bansal10,Danon11,Enright18,Rocha16}; the threshold at which a disease can become an epidemic is dependent on reshaping of the contact network \cite{Schwarzkopf10,Segbroeck10}. Volz and Meyers also derived analytical results that have shown epidemics are possible for a range of a mixing parameter that controls the heterogeneity of contacts \cite{Volz09}.  In the future, it will be interesting to understand epidemic thresholds and continuum approximations of state changes in order to determine the probability that an infection will ``die out" using the GRR analysis for different movement rules for the agents, which could extensively change mixing and contacts of individuals.} This future analysis will establish density and parameter bounds for when the E-AB GRR formulation is reliable. Early predictions of disease dynamics are necessary  \cite{Pellis2015,Roberts2015}, and the proposed framework can be extended to determine accuracy of these estimates for given parameter regimes.

\newpage
\appendix
\section{Derivation of metric}\label{MetricDeriv}
Suppose we create a linear spline interpolant $f(t)$ from a sequence of points $(t_1,S_1),(t_2,S_2),...,(t_N,S_M)$.  Suppose we also have another sequence of points $(\hat{t}_1, \hat{S}_1),(\hat{t}_2, \hat{S}_2),...,(\hat{t}_M,\hat{S}_M)$.  Our error metric is a normalized least square,
\begin{equation}
\nu = \frac{1}{M}\sum_{k=1}^M \inf_t \sqrt{ (\hat{t}_k-t)^2 + ( \hat{S}_k-f(t) )^2 },
\label{metric2}
\end{equation}
where our spline interpolation \cite{Stoer} is
$f(t) = \frac{S_{k+1}-S_k}{t_{k+1}-t_k}(t-t_k) + S_k, 
\text{for }t_k \leq t \leq t_{k+1}$.

Consider a point $(\hat{t}, \hat{S})$.  
We want to find $d(\hat \x, f) = \inf_t \sqrt{ (\hat{t} -t)^2 + (\hat{S} - f(t))^2 }$.  
The line that intersects points $(t_k, S_k)$ and $(t_{k+1}, S_{k+1})$ is given by 
\begin{equation}
\ell_k:= S = \frac{S_{k+1}-S_k}{t_{k+1}-t_k}(t-t_k) + S_k.
\label{line1}
\end{equation}  
To find $X$ such that $d(\hat\x, \ell_k)$ is minimized we find the line $\hat \ell$ that intersects $\hat \x$ and $\ell_k$.  This line is given by
\begin{equation}
\hat \ell:= S= -\frac{t_{k+1} - t_k}{S_{k+1} - S_k}(t-\hat t) + \hat S.
\label{line2}
\end{equation}
Setting (\ref{line1}) and (\ref{line2}) equal and solving for $t$ we find that
\begin{equation}
X = \frac{(t_{k+1}-t_k)(S_{k+1}-S_k)}{(t_{k+1}-t_k)^2 + (S_{k+1}-S_k)^2}
	\left( \frac{t_{k+1}-t_k}{S_{k+1}-S_k}\hat t +
	\frac{S_{k+1}-S_k}{t_{k+1}-t_k}t_k + \hat{S} - S_k \right).
\label{x-intersect}
\end{equation}
It follows that $Y=\ell_k (X)$.  We then calculate
\begin{equation}
d_k = \begin{cases}
\sqrt{ (\hat{t}-t_k)^2 + (\hat{S}-S_{k})^2 } & \text{: if } X<t_k, \\
\sqrt{ (\hat{t}-t_{k+1})^2 + (\hat{S}-S_{k+1})^2 } & \text{: if } S_{k+1} < X, \\
\sqrt{ (X-S_k)^2 + (Y-S_{k+1})^2 }  & \text{: otherwise.}
\end{cases}.
\end{equation}
We then have that 
$d(\hat \x, f) = \inf_t \sqrt{ (\hat{t}-t )^2 + 
( \hat{S}-f(t) )^2 } 
= \min\{d_1,d_2,...,d_{N-1}\}$.
It follows that we calculate the error by 
$\nu = \sum_{k=1}^M d\left(\hat \x_k, f\right)$.

\section{Sparse $\mu\left(\tB_t^{\S,\I}\right)$ formula}\label{Section:Sparse}
Let us assume, as we did when deriving the locally homogeneous region, that the infected agents are on the radial center of mass of the region $\tB_t^{\S,\I} \setminus \tB_{t-1}^{\S,\I}$, as shown in Figure \ref{InfectionOverviewInPaper}.  We will assume that there are $n$ newly infected agents that are uniformly distributed on the radial center of mass, a distance of $r$ from the initially infected agent.

\subsection{Deriving $\zeta_{k+1}$}
We want to find the total area $\mu\left( \bigcup_{i=1}^n A_i \right)$, where $n$ is the expected number of infected agents in the region $\tB_t^{\S,\I} \setminus \tB_{t-1}^{\S,\I}$ and $A_i$ is the region, illustrated in Figure \ref{InfectionOverviewInPaper}, of the $i$th infected agent.  
For our expository purposes, we will assume that the newly infected agents only exist in the edge of the expanding wave of infected agents, leading to the simplified derivation, $n = I_t - I_{t-1}$.

By the inclusion-exclusion principle \cite{VanLint} we find that
$\mu\left( \bigcup_{i=1}^n A_i \right) =
\sum_{i=1}^n \mu(A_i) - \sum_{i=1}^n \mu\left(A_i \cap A_{i+1}\right), 
\quad \text{where }A_{n+1}=A_1$.
Note that $\mu(A_i)=\mu(A_j)$ and that $\mu(A_i \cap A_{i+1}) = \mu(A_j \cap A_{j+1})$ for all $i,j=1,2,...,n$.  We then have that
\begin{equation}
\mu\left( \bigcup_{i=1}^n A_i \right) =
n\big( \mu(A) - \mu(A_1 \cap A_2) \big).
\label{inclusionexclusion}
\end{equation}

First we will find $\mu(A)$, the region shown in Figure \ref{AOverview}.
\begin{figure}[H]
\begin{center}
\subfloat[]{
     \includegraphics[width=0.2\textwidth]{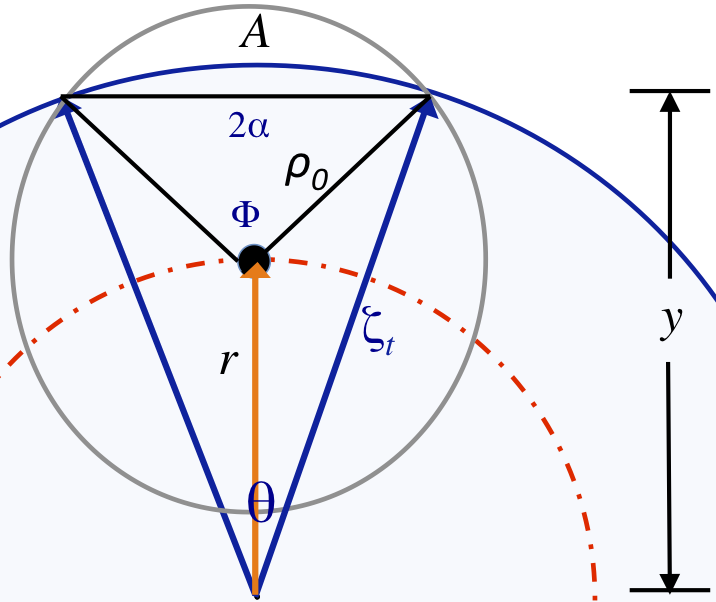} \label{AOverview}}	\qquad \qquad    
     \subfloat[]{
          \includegraphics[width=0.2\textwidth]{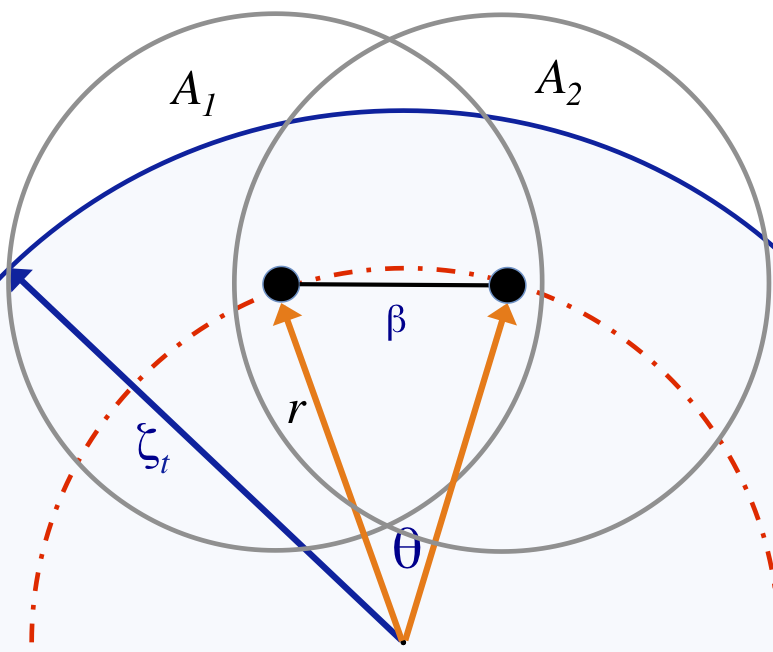} \label{A1A2Regions}	}	      
   \end{center}
  \caption{Newly infected agents (black circles) lie on the radial center of mass of the region $\tB_t^{\S,\I}\setminus \tB_{t-1}^{\S,\I}$, a distance $r$ from the initially infected agent.
  (\ref{AOverview}): Solving for $\mu(A)$ as part of calculating how large the infection front becomes.  The infectivity radius of the agent intersects the edge of region $\tB_t^{\S,\I}$ at two points, creating an angle $\theta$ from the center of the region and an angle $\Phi$ from the agent. 
  (\ref{A1A2Regions}): Solving for $\mu(A_1)\cap \mu(A_2)$.   The infected agents are a distance $\beta$ apart and form an angle $\theta$ from the center of the region $\tB_t^{\S,\I}$.
    }
\end{figure}
We already know $r$ and $\zeta_t$.  By our assumption, $\theta = 2\pi/n$,  
we will find $y$ by finding the intersection of $C_1$ and $C_2$ defined by
\begin{align*}
C_1 &: x^2 + (y-r)^2 = \rho_0^2, \\
C_2 &: x^2 + y^2 = \zeta_t^2.
\end{align*}
It follows that $y=\frac{\zeta_t^2 - \rho_0^2 + r^2}{2r}$.  
We can then find $\alpha = \sqrt{\zeta_t^2 - y^2}$ and $\Phi = 2\arcsin \left( \frac{\alpha}{\rho_0} \right)$.

From Figure \ref{ARegions} we know that $\mu(A) = \mu(R_2)-\mu(R_4)$.  It is clear that
$\mu(R_2) = \mu(R_1 \cup R_2) - \mu(R_1) 
= \frac{\Phi}{2}\rho_0^2 - \alpha \sqrt{\rho_0^2 - \alpha^2}$
and
$\mu(R_4) = \mu(R_3 \cup R_4) - \mu(R_3)
= \frac{\theta}{2}\zeta_t^2 - \alpha y$.

\begin{figure}[H]
\begin{center}
     \includegraphics[width=0.15\textwidth]{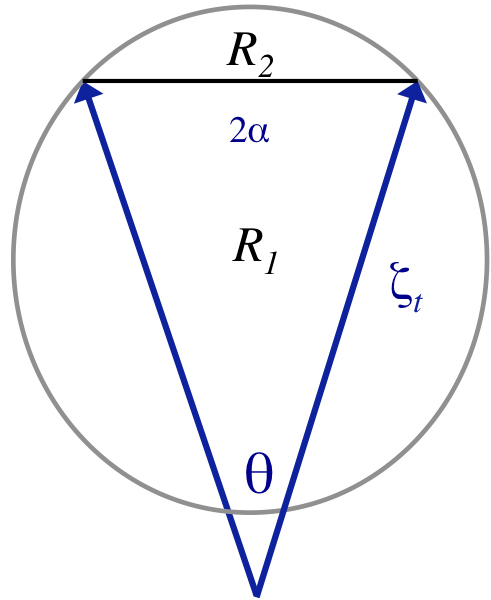}\qquad  \qquad
     \includegraphics[width=0.35\textwidth]{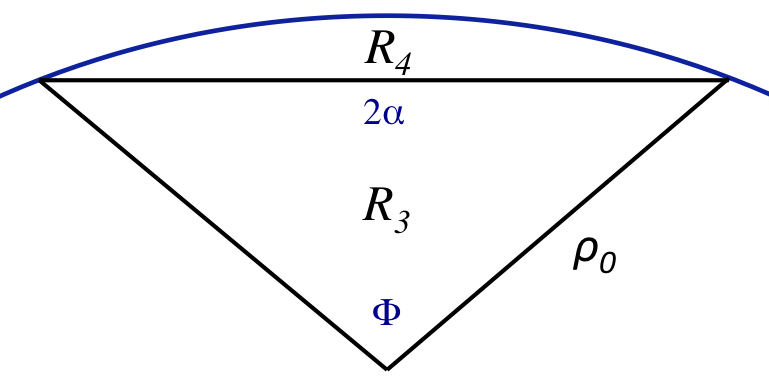}		      
   \end{center}
  \caption{We solve for region $A$ by subtracting $\mu(R_4)$ from $\mu(R_2)$.  We decompose solving for $\mu(A)$ in Figure \ref{AOverview} by solving for the outer sector (left) and the inner sector (right).}
  \label{ARegions}
\end{figure}

We then have that
\begin{equation}
\mu(A) = \frac{1}{2}\left(\Phi \rho_0^2 - \theta \zeta_t^2\right) 
- \alpha\left(\sqrt{\rho_0^2 - \alpha^2} - y\right).
\label{muA}
\end{equation}

Now we will find $\mu(A_1)\cap \mu(A_2)$.  From Figure \ref{A1A2Regions} we know $\theta$, $r$, and $\zeta_t$.  
We want to find the $x$-coordinate of the intersection of $C_1$ and $C_2$ defined as
\begin{align*}
C_1 &: (x+h)^2 + (y-k)^2 = \rho_0^2, \\
C_2 &: x^2 + y^2 = \zeta_t^2
\end{align*}
with $h=\beta/2$ and $k = \sqrt{r^2 - (\beta/2)^2}$.
The intersection is the larger solution $\hat x$ of the quadratic
$4\left(h^2+k^2\right)\hat{x}^2 + 
4h\left(2k^2+\eta\right)\hat{x} 
+ \Big(\eta^2 - 4k^2\left(\rho_0^2-h^2\right)\Big)=0$,
where $\eta = h^2-k^2+\zeta_k^2-\rho_0^2$.

We then have that
$\mu(A_1 \cap A_2) = 
2\int_0^{\hat{x}} \left( k + \sqrt{\rho_0^2 - (x+h)^2} - \sqrt{\zeta_t^2 - x^2}\right)\,dx$ if $\hat x>0$.
After integrating, if $\hat{x}>0$ we have
\begin{footnotesize}
\begin{equation}
\begin{split}
\mu(A_1\cap A_2) &= 
 (h+\hat{x})\sqrt{\rho_0^2 - (h+\hat{x})^2} 
+ \rho_0^2 \arctan\left(\frac{h+\hat{x}}{\sqrt{\rho_0^2 - (h+\hat{x})^2}}\right) 
- \zeta_t^2\arctan\left( \frac{\hat{x}}{\sqrt{\zeta_t^2-\hat{x}^2}} \right)
\\
& \quad -\hat{x}\sqrt{\zeta_t^2 - \hat{x}^2} + 
2k\hat{x} 
-\left[ h\sqrt{\rho_0^2 - h^2} 
+ \rho_0\arctan\left( \frac{h}{\sqrt{\rho_0^2-h^2}} \right) \right].
\end{split} 
\label{muA1A2}
\end{equation}
\end{footnotesize}

After inserting equations (\ref{muA}) and (\ref{muA1A2}) into equation (\ref{inclusionexclusion}), we have a computable formula for $\mu(\cup_{k=1}^n A_k)$.  
Our new wavefront radius is
\begin{equation} 
\zeta_{t+1} = \sqrt{ \frac{\pi \zeta_t^2 + \mu(\cup_{i=1}^n A_i)}{\pi} }.
\end{equation}

The above formulation works well for low density E-AB simulations, where agents in state $\R$ do not return to state $\S$ 
($T_R$ is longer than the time of the simulation).  
However, if recovered agents can become susceptible, then we must reformulate our calculation of the expected value of $n$.

\bibliographystyle{plain}
\bibliography{ms}

\end{document}